\newtheorem{theorem}{Theorem}[section]
\newtheorem{lemma}[theorem]{Lemma}
\newtheorem{corollary}[theorem]{Corollary}
\newtheorem{proposition}[theorem]{Proposition}
\newtheorem{definition}[theorem]{Definition}
\numberwithin{equation}{section}
\newcommand\quant{\advance\quantno by1
                      \ifnum\quantno=1\qquad\else\quad\fi\forall }
\newcommand\itemno[1]{(\romannumeral #1)}
\newcommand\Dom{\operatorname{\text{Dom}}}
\newcommand\rest[1]{\kern-.1em
          \lower.5ex\hbox{$\scriptstyle #1$}\kern.05em}
\newcommand\set[1]{{\left\{#1\right\}}}
\renewcommand\mod[1]{\left\vert{#1}\right\vert}
\newcommand\bigmod[1]{\bigl\vert{#1}\bigr|}
\newcommand\Bigmod[1]{\Bigl\vert{#1}\Bigr|}
\newcommand\norm[2]{{\Vert{#1}\Vert_{#2}}}
\newcommand\opnorm[2]{|\!|\!| {#1} |\!|\!|_{#2}}
\newcommand\wrt{\,\text{\rm d}}
\newcommand\BC{\mathbb{C}}
\newcommand\BN{\mathbb{N}}
\newcommand\BR{\mathbb{R}}
\newcommand\cB{\mathcal{B}}
   \newcommand\frh{\mathfrak{h}}
\newcommand\cI{\mathcal{I}}
\newcommand\cL{\mathcal{L}}   
   \newcommand\frm{\mathfrak{m}}
\newcommand\cP{\mathcal{P}}
\newcommand\cT{\mathcal{T}}
  \newcommand\fX{\mathfrak{X}}
\newcommand\be{\beta}
\newcommand\ga{\gamma}    \newcommand\Ga{\Gamma}
\newcommand\de{\delta}
\newcommand\la{\lambda}   
\newcommand\si{\sigma}
\newcommand\vp{\varphi}
\newcommand\OV{\overline}
\newcommand\funnyk{k\hbox to 0pt{\hss\phantom{g}}}
\newcommand\lu[1]{L^1(#1)}
\newcommand\lp[1]{L^p(#1)}
\newcommand\ld[1]{L^2(#1)}
\newcommand\hu[1]{H^1(#1)}
\newcommand\huG[1]{\frh^1(#1)}
\newcommand\wt{\widetilde}
\newcommand\whH{\widehat{\phantom{G}}\hbox to 0pt{\hss $H$}}
\newcommand\emspace{\hbox to 6pt{\hss}}
\newcommand\rmi{\hbox{\rm (i)}}
\newcommand\rmii{\hbox{\rm (ii)}}
\newcommand\rmiii{\hbox{\rm (iii)}}
\newcommand\One{{\mathbf{1}}}
\newcommand\OU{{Ornstein--Uhlenbeck}\ }
\newcommand\e{\mathrm{e}}
\newcommand\EB{{\rm{(AMP)}}}
\newcommand\PP{{\rm{(I)}}}
\newcommand\PPc{{\rm{(I$^c$)}}}
\newcommand\supp{\mathrm{supp}\,}
\newcommand\ca{\textbf{1}}
\begin{document}

\title[]{Comparison of spaces of Hardy type \\
for the Ornstein--Uhlenbeck operator}

\keywords{Gaussian measure, Hardy spaces, Ornstein--Uhlenbeck operator, Singular integrals, Imaginary powers, Riemannian manifold, Homogeneous tree.}

\thanks{Work partially supported by the
Progetto PRIN 2007 ``Analisi Armonica''.}

\author[A. Carbonaro, G. Mauceri and S. Meda]
{Andrea Carbonaro, Giancarlo Mauceri and Stefano Meda}

\address{A. Carbonaro, G. Mauceri: Dipartimento di Matematica \\
Universit\`a di Genova \\ via Dodecaneso 35, 16146 Genova \\ Italia}
\address{S. Meda: Dipartimento di Matematica e Applicazioni
\\ Universit\`a di Milano-Bicocca\\
via R.~Cozzi 53\\ 20125 Milano\\ Italy}

\begin{abstract}
Denote by $\ga$ the Gauss measure on $\BR^n$
and by $\cL$ the \OU operator.
In this paper we introduce a Hardy space $\huG{\ga}$ 
of Goldberg type and show that for each $u$ in $\BR\setminus\{0\}$
and $r>0$ 
the operator $(r\cI+\cL)^{iu}$ is unbounded from $\huG{\ga}$ to $\lu{\ga}$.  
This result is in sharp contrast
both with the fact that $(r\cI+\cL)^{iu}$ is bounded
from $\hu{\ga}$ to $\lu{\ga}$, where $\hu{\ga}$ denotes
the Hardy type space introduced in \cite{MM},
and with the fact that in the Euclidean case $(r\cI-\Delta)^{iu}$ is 
bounded from the Goldberg space $\huG{\BR^n}$ to $\lu{\BR^n}$.
We consider also the case of Riemannian manifolds $M$
with Riemannian measure $\mu$. 
We prove that, under certain geometric assumptions on $M$,
an operator $\cT$, bounded on $\ld{\mu}$,
and with a kernel satisfying certain analytic assumptions,
is bounded from $\hu{\mu}$ to $\lu{\mu}$
if and only if it is bounded from $\huG{\mu}$ to $\lu{\mu}$.
Here $\hu{\mu}$ denotes the Hardy space introduced in \cite{CMM1},
and $\huG{\mu}$ is defined in Section~\ref{s: Measured}, and is
equivalent to a space recently introduced by M.~Taylor \cite{T}.
The case of translation invariant operators on homogeneous trees is
also considered.
\end{abstract}

\maketitle

\section{Introduction} \label{s: Introduction}

Denote by $\ga$ the Gauss measure on $\BR^n$, i.e. the probability
measure with density $x\mapsto\pi^{-n/2}\, \e^{-\mod{x}^2}$ 
with respect to the Lebesgue measure. 

Harmonic analysis on the measured metric space $(\BR^n,d,\ga)$, where
$d$ denotes the Euclidean distance on $\BR^n$, has been the object
of many investigations.  In particular, efforts have been made
to study operators related to the \OU\ semigroup, with emphasis
on maximal operators \cite{S,GU,MPS1, GMMST2},
Riesz transforms 
\cite{Mu, Gun, M1, Pisier, Pe, Gut, GST, FGS, FoS, GMST1, PS, U, DV}
and functional calculus \cite{GMST2,GMMST1,MMS}.

In \cite{MM} the authors defined an atomic Hardy type space 
$\hu{\ga}$ associated to~$\ga$.  We briefly recall its 
definition.  An Euclidean ball $B$ is called \emph{admissible} if 
\begin{equation} \label{f: 1-admissible} 
r_B \leq \min\bigl(1,1/\mod{c_B}\bigr),
\end{equation}
where $r_B$ and $c_B$ denote the radius and the centre of $B$ respectively. 
An $\hu{\ga}$-\emph{atom} is either the constant function $1$ or 
a function $a$ in $\lu{\ga}$,
supported in an admissible ball $B$, such that
\begin{equation}  \label{f: prop atom}
\norm{a}{2} \leq \ga(B)^{-1/2}
\qquad\hbox{and}\qquad
\int_{\BR^n} a \, \wrt \ga =0,
\end{equation}
where $\norm{a}{2}$ denotes the norm of $a$ with respect to the Gauss measure.
The space $\hu{\ga}$ is then the vector space of all functions 
$f$ in $\lu{\ga}$ that admit a decomposition of the form 
$\sum_j \la_j \, a_j$, where the $a_j$'s are $\hu{\ga}$-atoms 
and the sequence of complex numbers $\{\la_j\}$ is summable. 
The norm of $f$ in $\hu{\ga}$ is defined as the infimum
of $\sum_j\mod{\la_j}$ over all representations of $f$ as above.

Note that $\hu{\ga}$ is defined much as the atomic space $H^1$
on spaces of homogeneous type in the sense
of R.R.~Coifman and G.~Weiss \cite{CW}, but with a difference. 
Namely, 
only the exceptional atom and atoms with ``small support'',
i.e., with support contained in admissible balls,
appear in the definition of $\hu{\ga}$.
This difference may appear irrelevant at first sight,
but it is, in fact, quite subtle and has important consequences.   
It is motivated by the fact that 
the Gauss measure of an open set $A$ in $\BR^n$ 
far away from the origin is concentrated in a ``thin'' shell
near the boundary of $A$.  More precisely, the following 
quantitative estimate holds \cite[Lemma~3.2~\rmii]{MM}:
there exist a ball $B_0$ centred at the origin and a constant $C$ such that
for each sufficiently small positive number $\kappa$ 
and each open set $A$ contained in~$B_0^c$
\begin{equation} \label{f: boundary gamma}
\ga\bigl(\{ x \in \BR^n: d(x, A^c) \leq \kappa/\mod{x} \}\bigr)
\geq C\, \kappa \, \ga(A):
\end{equation}
here $d$ denotes the Euclidean distance. 
Observe that the measured metric space $(\BR^n, d, \ga)$ is nondoubling. 

One of the results in \cite{MM} is that if an operator $\cT$
is bounded on $\ld{\ga}$ and has an integral kernel that satisfies
a local H\"ormander's type integral condition (see (\ref{f: hormander})
below), then $\cT$ is bounded from $\hu{\ga}$ to $\lu{\ga}$,
and, consequently on $\lp{\ga}$ for all $p$ in $(1,2)$.  
This result applies, for instance, to the imaginary powers of the \OU operator
(see Section~\ref{s: Notation} for the precise definition), and,
\emph{a fortiori}, to the operators $(\cI+\cL)^{iu}$, where $u$ is in $\BR$. 

In the Euclidean setting D.~Goldberg \cite{G} defined a ``local''
space of Hardy type $\huG{\BR^n}$.  It is defined much as the 
atomic Hardy space $\hu{\BR}$, but atoms are now either standard atoms
supported in small balls or square integrable functions
supported on large balls satisfying the usual size condition,
but without any cancellations.

Recently M.~Taylor~\cite{T} defined and studied a
local Hardy space of Goldberg type in the setting of
Riemannian manifolds with bounded geometry.  
Taylor's definition has a natural analogue in the Gauss setting.
In Section~\ref{s: Notation} we shall define a \emph{local} Hardy space of
Goldberg type $\huG{\ga}$ associated to the Gauss measure.
The $\huG{\ga}$-atoms are either $\hu{\ga}$-atoms, 
or functions $a$ supported in a ball $B$ 
with $r_B = \min(1,1/\mod{c_B})$, 
and satisfying the size condition in (\ref{f: prop atom}), 
but \emph{not} the cancellation condition.  We shall show that
$\hu{\ga}$ is properly contained in~$\huG{\ga}$.
Clearly if $\cT$ is a bounded linear operator from $\huG{\ga}$
to $\lu{\ga}$, then it is also bounded from $\hu{\ga}$
to $\lu{\ga}$.  The converse implication fails.  This is one of the main
result of this paper.

Specifically, we shall prove that if $\cT$ is bounded
from $\huG{\ga}$ to $\lu{\ga}$ and its kernel~$k_{\cT}$ 
satisfies a local H\"ormander type condition, then $k_{\cT}$ 
is ``uniformly integrable at infinity'', i.e., 
$$
\sup_{y\in\BR^n} \int_{(2B_y)^c} \mod{k(x,y)}\wrt\ga(x)
<\infty;
$$
here we denote by $B_y$ the ball
with centre $y$ and radius $\min(1,1/\mod{y})$.
As a consequence, in Section~\ref{s: Imaginary powers} 
we shall prove that if $u$ is in $\BR  \setminus\{0\}$
and $r$ is positive, 
then the operator $(r\cI+ \cL)^{iu}$,
which is bounded from $\hu{\ga}$ to $\lu{\ga}$ \cite[Thm~7.2]{MM},
is unbounded from $\huG{\ga}$ to $\lu{\ga}$.

The analysis on the Gauss space described above
may be put into a wider perspective.  Consider on $\BR^n$ 
the Riemannian distance~$\rho'$, whose length element is given by
\begin{equation} \label{f: rho primo}
\wrt s^2
= (1+\mod{x}^2)\, \, (\wrt x_1^2+\cdots +\wrt x_n^2).
\end{equation}
It is not hard to check \cite{CMM2} that balls
of radius at most $1$ with respect to $\rho'$ are
``equivalent'' to admissible balls, i.e., balls with respect 
to the Euclidean distance satisfying condition (\ref{f: 1-admissible}).
Condition (\ref{f: boundary gamma}) is then equivalent to
the following
$$
\ga\bigl(\{ x \in \BR^n: \rho'(x, A^c) \leq \kappa \}\bigr)
\geq C\, \kappa \, \ga(A).
$$
In the terminology of \cite{CMM2}
the measured metric space $(\BR^n, \rho', \ga)$ 
possesses the so called complementary 
isoperimetric property (see \cite[Section~8]{CMM2}).

An theory of Hardy type spaces
on a fairly large class of measured metric spaces $(M,\rho,\mu)$ 
has recently been developed in \cite{CMM1,CMM2}.
In these papers we assume that $\mu$ is a locally doubling measure, that
$(M,\rho,\mu)$ possesses an approximate midpoint property
(see Section~\ref{s: Measured} below), and 
either the isoperimetric or the complementary isoperimetric
property, according to whether $\mu(M)$ is infinite or not.
When the theory constructed in \cite{CMM2} is applied
to the space $(M,\rho',\ga)$, then the Hardy space $\hu{\mu}$
defined in \cite{CMM2} coincides with the space $\hu{\ga}$ 
defined above for the Gauss measure.  

Analogues of the local Hardy space of Goldberg type
may also be defined in this more general setting.
A natural question is whether there are singular
integral operators which are bounded from 
$\hu{\mu}$ to $\lu{\mu}$ but unbounded from
$\huG{\mu}$ to $\lu{\mu}$.   In Section~\ref{s: Measured}
we consider the cases where $M$ is either a homogeneous tree,
or a Riemannian manifold with spectral gap and
Ricci curvature bounded from below.

In the case of trees
we prove that an operator invariant with respect to the group
of isometries of the tree is bounded from $\hu{\mu}$ to $\lu{\mu}$
if and only if it is bounded from $\huG{\mu}$ to $\lu{\mu}$.

In the case of manifolds we prove that if $\cT$ is a 
bounded linear operator on $\ld{\mu}$ and has a kernel $k$
satisfying 
$$
\sup_{y \in M} \int_{B(y, 2)^c} \mod{\nabla_1 k(x,y)} \wrt\mu(y)
<\infty,
$$
where $\nabla_1$ denotes the gradient with respect to the first 
variable, then $\cT$ is bounded from $\hu{\mu}$
to $\lu{\mu}$ if and only if it is bounded from $\huG{\mu}$ to $\lu{\mu$}.

Furthermore, if $M$ is a \emph{unimodular} Lie group and we endow 
$M$ with a left invariant Riemannian metric, then 
a linear operator $\cT$ bounded on $\ld{\mu}$ 
and with kernel satisfying a \emph{local H\"ormander type integral
condition} (see (\ref{f: hormander I}) below), 
is bounded from $\hu{\mu}$
to $\lu{\mu}$ if and only if it is bounded from $\huG{\mu}$ to $\lu{\mu$}.

We will use the ``variable constant convention'', and denote by $C,$
possibly with sub- or superscripts, a constant that may vary from place to 
place and may depend on any factor quantified (implicitly or explicitly) 
before its occurrence, but not on factors quantified afterwards.

\section{Notation and background information}
\label{s: Notation}

The norm of a function $f$ in $\lp{\mu}$ will simply be
denoted by $\norm{f}{p}$.  
If $\cT$ is a bounded linear operator on $\lp{\mu}$, 
we shall write $\opnorm{\cT}{p}$ for its $\lp{\mu}$ operator norm.

We shall consider linear operators $\cT$ on various measure
spaces $(M,\mu)$.  When we do, we often associate to $\cT$
its kernel, which is defined as follows.

\begin{definition}
\emph{Suppose that $\cT$ is a bounded linear operator on 
$\ld{\mu}$ and that $k$ is a function on $M\times M$, 
locally integrable off the diagonal, such that
for every bounded function $f$ with compact support
$$
\cT f(x)
= \int_{M}k(x,y) \, f(y)\wrt\mu(y)
\quant x\in M\setminus\supp{f}.
$$
Then we say that $k$ is the \emph{kernel} of $\cT$
with respect to the measure $\mu$.}
\end{definition}

In this section and in Section~\ref{s: Imaginary powers} 
Lebesgue spaces will be with respect to the Gauss measure.
In Section~\ref{s: Measured} we shall also consider
Lebesgue spaces with respect to different measures $\mu$
on quite general measured metric spaces.  

Now we define the Hardy space of Goldberg type $\huG{\ga}$.

\begin{definition} 
\emph{A \emph{global} atom $a$ 
is a function in $\ld{\ga}$ with support contained 
in a ball $B$ of radius exactly equal to $\min (1,1/\mod{c_B})$ such that 
$$
\norm{a}{2}\leq \ga(B)^{-1/2}.
$$
A $\huG{\ga}$-atom is either a $\hu{\ga}$-atom 
(see the definition at the beginning of the Introduction)
or a global atom.}
\end{definition}
 
\begin{definition}
\emph{The Hardy space of Goldberg type $\huG{\ga}$ is the vector
space of all functions~$f$ which admit a decompositions of the form
\begin{equation} \label{f: Goldberg space}
f
=  \sum_j \la_j \, a_j,
\end{equation}
where the sequence $\{\la_j\}$ is summable 
and the $a_j$'s are $\huG{\ga}$-atoms.
The norm of $f$ in $\huG{\ga}$ is the infimum of $\sum_j \mod{\la_j}$
as $\{\la_j\}$ varies over all decompositions 
(\ref{f: Goldberg space}) of~$f$.}
\end{definition}

The $\hu{\ga}$-atoms and the global atoms that we consider are
often referred to as $(1,2)$-atoms.  In \cite{MM} it is shown that 
the space $\hu{\ga}$ may be defined in terms of the so-called
$(1,q)$-atoms, where $q$ is any number in $(1,\infty]$.  
A similar theory may also be developed for the space $\huG{\ga}$.
We omit the details. 

In the following proposition we shall make use of the space $BMO(\ga)$.
Recall that an integrable function is in $BMO(\ga)$ if 
$$
\norm{f}{BMO(\ga)} 
:= \norm{f}{1} + \sup_{B} \frac{1}{\ga(B)} \int_B \mod{f- f_B} \wrt \ga
< \infty,
$$
where the supremum is with respect to all admissible balls and
$$
f_B = 
\frac{1}{\ga(B)} \int_B f \wrt \ga.
$$ 
It is known that $BMO(\ga)$ is the Banach dual of $\hu{\ga}$
\cite[Thm~5.2]{MM}.

\begin{proposition}
The inclusion $\hu{\ga}\subset \huG{\ga}$ is strict.
\end{proposition}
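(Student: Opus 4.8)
The inclusion itself is immediate: every $\hu{\ga}$-atom is, by definition, a $\huG{\ga}$-atom, so any atomic decomposition of $f$ in $\hu{\ga}$ is also an admissible decomposition in $\huG{\ga}$; hence $f \in \huG{\ga}$ and $\norm{f}{\huG{\ga}} \le \norm{f}{\hu{\ga}}$. Thus the inclusion map $\iota : \hu{\ga} \to \huG{\ga}$ is bounded with norm at most $1$, and everything reduces to showing that $\iota$ is \emph{not} surjective. The plan is to argue by contradiction: if $\hu{\ga}$ and $\huG{\ga}$ coincided as sets, then $\iota$ would be a continuous linear bijection between Banach spaces, and the open mapping theorem would yield the reverse inequality $\norm{f}{\hu{\ga}} \le C\,\norm{f}{\huG{\ga}}$ for all $f$. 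The strategy is then to produce a family of \emph{global} atoms whose $\huG{\ga}$-norms are uniformly bounded but whose $\hu{\ga}$-norms are forced to be arbitrarily large, contradicting this equivalence.

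Concretely, I would fix points $c_k \in \BR^n$ with $\mod{c_k} \to \infty$ and set $B_k = B\bigl(c_k, 1/\mod{c_k}\bigr)$, so that $a_k := \ga(B_k)^{-1}\,\charfn{B_k}$ is a global atom: it is supported in a ball of radius exactly $\min(1,1/\mod{c_k})$ and satisfies $\norm{a_k}{2} = \ga(B_k)^{-1/2}$. In particular $\norm{a_k}{\huG{\ga}} \le 1$ for every $k$. To bound $\norm{a_k}{\hu{\ga}}$ from below I would use the duality $(\hu{\ga})^\ast = BMO(\ga)$ recalled above, testing against $g(x) = \mod{x}^2$. Since the $a_k$ are compactly supported and lie in $\ld{\ga}$, the pairing $\prodo{a_k}{g}$ is the honest integral $\int a_k\, g \wrt\ga$, and one computes, using $\mod{x} \ge \mod{c_k} - 1/\mod{c_k}$ on $B_k$, that $\int a_k\, g \wrt\ga = \ga(B_k)^{-1}\int_{B_k}\mod{x}^2\wrt\ga \ge \mod{c_k}^2 - 2 \to \infty$. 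The duality inequality $\bigmod{\prodo{a_k}{g}} \le C\,\norm{a_k}{\hu{\ga}}\,\norm{g}{BMO(\ga)}$ then gives $\norm{a_k}{\hu{\ga}} \ge c\,\mod{c_k}^2 \to \infty$, the desired contradiction, \emph{provided} we know $g \in BMO(\ga)$.

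The only genuinely nonroutine point — and the step I expect to be the main obstacle — is therefore verifying that $g(x) = \mod{x}^2$ belongs to $BMO(\ga)$. Its $\lu{\ga}$-norm is finite because the Gauss measure has finite second moment. For the oscillation, the essential observation is that admissibility is \emph{exactly} the constraint making $\mod{x}^2$ vary by $O(1)$ on an admissible ball: if $r_B \le \min(1,1/\mod{c_B})$ then for $x \in B$ one has $\bigmod{\mod{x}^2 - \mod{c_B}^2} = \mod{x-c_B}\,\mod{x+c_B} \le r_B\bigl(r_B + 2\mod{c_B}\bigr) \le 3$, whence $\frac{1}{\ga(B)}\int_B \bigmod{g - g_B}\wrt\ga \le C$ uniformly over all admissible balls. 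This yields $\norm{g}{BMO(\ga)} < \infty$ and closes the argument.

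Although the open mapping reduction is the cleanest route, I note that it can be bypassed: choosing a summable sequence $\{\la_k\}$ with $\sum_k \la_k\,\mod{c_k}^2 = \infty$ and disjointly supported $a_k$, the function $f = \sum_k \la_k\, a_k$ lies in $\huG{\ga}$ by construction, while the same pairing against $g$ (now legitimate by monotone convergence, since $a_k, g \ge 0$) diverges, showing directly that $f \notin \hu{\ga}$. The delicate point in this variant is justifying that the $\hu{\ga}$--$BMO(\ga)$ pairing agrees with $\int f\,g\wrt\ga$ for the infinite sum, which is precisely the issue the open mapping argument sidesteps by keeping all estimates at the level of single atoms.
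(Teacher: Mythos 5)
Your proof is correct and follows essentially the same route as the paper: both show $\mod{x}^2 \in BMO(\ga)$ via the observation that admissibility forces $\bigmod{\mod{x}^2-\mod{c_B}^2}\leq 3$ on admissible balls, then assume the two spaces coincide, invoke a Banach-space theorem (you use open mapping, the paper the closed graph theorem) to get norm equivalence, and contradict it by pairing normalized indicators of maximal admissible balls far from the origin against $\mod{x}^2$ under the $\hu{\ga}$--$BMO(\ga)$ duality. The only quibble is that in dimension $n\geq 2$ your identity $\bigmod{\mod{x}^2-\mod{c_B}^2}=\mod{x-c_B}\,\mod{x+c_B}$ should be an inequality (Cauchy--Schwarz applied to $(x-c_B)\cdot(x+c_B)$), which is all you need; the paper sidesteps this by treating only $n=1$.
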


\begin{proof}
For the sake of simplicity  we consider only the case
where $n=1$.  

First we show that the monomial $x\mapsto x^2$ is in $BMO(\ga)$.
Indeed, denote by $I$ any admissible interval, with centre $c_I$
and radius $r_I$.  Observe that 
$$
\begin{aligned}
\mod{x^2 - c_I^2} 
& =    \mod{x - c_I}\, \mod{x+c_I} \\
& \leq r_I \, (r_I+2\mod{c_I})
\quant x \in I.
\end{aligned}
$$
If $\mod{c_I} \geq 1$, the right hand side may be estimated
by $3r_I \mod{c_I}$, which is bounded by $3$ because $I$ is
admissible.  If $\mod{c_I} \leq 1$, then the right hand side
is at most $r_I \, (r_I+2)$, which is dominated by $3$, because $r_I \leq 1$.
Therefore
$$
\int_{I} \mod{x^2-c_I^2} \wrt \ga(x) 
\leq 3 \, \ga(I),
$$
so that $x\mapsto x^2$ is in $BMO(\ga)$. 

Suppose, by contradiction, that $\hu{\ga}=\huG{\ga}$. 
Then, by the closed graph theorem there exists a constant 
$C$ such that 
$\norm{f}{\hu{\ga}}
\le C \norm{f}{\huG{\ga}}$ for all functions $f$ in $\huG{\ga}$. 
In particular,
\begin{equation}\label{contra}
\norm{\One_{I}/\gamma(I)}{\hu{\ga}}\le C
\end{equation}
for all maximal admissible intervals $I$.

Since the integral $\int_I x^2\wrt\ga$ is absolutely convergent, 
the pairing between $\One_I$ and the function 
$x\mapsto x^2$ is given by $\int_I x^2\wrt\gamma$ 
(this follows from \cite[Thm~5.2]{MM} and the fact that 
$BMO(\gamma)$ is a lattice, as in \cite[IV.1.2]{St2}).

Now observe that, if $\mod{c_I}$ is sufficiently large,
$$
\begin{aligned}
\norm{x^2}{BMO(\ga)} \,\norm{\One_I}{\hu{\ga}}
& \geq \, \int_I \, x^2 \wrt\ga(x)  \\
& \ge \, \frac{\mod{c_I}^2}{2} \, \ga(I),
\end{aligned}
$$
so that the supremum of the $\hu{\ga}$-norms of the functions $\One_I/\ga(I)$
is unbounded as $I$ varies over all maximal admissible intervals, contradicting (\ref{contra}).
\end{proof}

\section{Imaginary powers of the \OU operator}
\label{s: Imaginary powers}

The \OU operator $\cL$ is
the closure in $\ld{\ga}$ of the operator $\cL_0$,
defined by
$$
\cL_0
= -\frac{1}{2}\,  \Delta + x\cdot\nabla
\quant f \in C^\infty_c(\BR^n),
$$
where $\Delta$ and $\nabla$ denote the Euclidean
Laplacian and gradient respectively.
The spectral resolution of the identity of $\cL$ is
$$
\cL f
= \sum^{\infty}_{j=0} j\, \cP_jf  
\quant f\in\Dom(\cL),
$$
where $\cP_j$ is the orthogonal projection onto the linear span 
of Hermite polynomials of degree~$j$ in $n$ variables.  
For each $u$ in $\BR$
consider the sequence $M_{u}:\BN\rightarrow \BC$, defined by
$$
M_{u}(j)=
\begin{cases}
0       & {\rm if}\ j=0\\
j^{iu}  & {\rm if}\ j=1,2,\ldots
\end{cases}
$$
The family of (spectrally defined) operators $\set{M_u(\cL)}_{u\in\BR}$ 
will be referred to as {\it imaginary powers} of the \OU operator. 
They are bounded on $\lp{\ga}$ for every 
$p$ in $(1,\infty)$, by the general 
Littlewood--Paley--Stein theory for generators of symmetric 
diffusion semigroups \cite{St1}. Sharp estimates 
of the behavior of their norms on $\lp{\ga}$ 
as $\mod{u}$ tends to infinity have been given in 
\cite{GMMST1} and \cite{MMS}, where the estimates are used 
to prove spectral multiplier theorems. 
It is also known that they are of weak type $(1,1)$ \cite{GMST2} 
and bounded from $\hu{\ga}$ to $\lu{\ga}$ \cite{MM}. 

In this section we shall show that for each
$u$ in $\BR\setminus \{0\}$ the operator $(\cI + \cL)^{iu}$
is unbounded from $\huG{\ga}$ to $\lu{\ga}$.
Slight modifications of the proof show that a similar result holds
for all $r$ in $\BR^+$ with $(r\cI + \cL)^{iu}$ in place of $(\cI + \cL)^{iu}$.

This result is in sharp contrast with the Euclidean case.  
Indeed,  it is well known \cite{G} that 
the operator $(\cI-\Delta)^{iu}$ is 
bounded from $\huG{\BR^n}$ to $\lu{\BR^n}$.

We shall need the following lemma.

\begin{lemma} \label{l: MSV}
Suppose that $\cT$ is a $\lu{\ga}$-valued linear operator 
defined on finite linear combinations of $\huG{\ga}$-atoms.  
The following are equivalent:
\begin{enumerate}
\item[\itemno1]
$\cT$ extends to a bounded operator from $\huG{\ga}$ to $\lu{\ga}$;
\item[\itemno2]
$\sup \{ \norm{\cT a}{1}: \hbox{\emph{$a$ is a $\huG{\ga}$-atom}} \}
<\infty$.
\end{enumerate}
\end{lemma}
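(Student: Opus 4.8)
The plan is to prove the two implications separately, noting that \itemno1 $\Rightarrow$ \itemno2 is the elementary direction (and, via its contrapositive, the one actually needed for the unboundedness result later in this section, where one exhibits $\huG{\ga}$-atoms $a$ with $\norm{\cT a}{1}$ arbitrarily large), whereas \itemno2 $\Rightarrow$ \itemno1 carries the real content. Throughout I would use the elementary fact that every $\huG{\ga}$-atom $a$ satisfies $\norm{a}{1}\le 1$ (by Cauchy--Schwarz and the size condition $\norm{a}{2}\le\ga(B)^{-1/2}$, and trivially for the exceptional atom), so that $\norm{f}{1}\le\norm{f}{\huG{\ga}}$ for every $f$; in particular $\huG{\ga}$ embeds continuously into $\lu{\ga}$.

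For \itemno1 $\Rightarrow$ \itemno2, I would observe that every atom $a$ admits the trivial representation $a=1\cdot a$ in (\ref{f: Goldberg space}), whence $\norm{a}{\huG{\ga}}\le 1$. If $\cT$ extends to a bounded operator $\widetilde{\cT}$ from $\huG{\ga}$ to $\lu{\ga}$ with operator norm $C$, then, since $\widetilde{\cT}$ agrees with $\cT$ on the finite combinations (in particular on atoms), $\norm{\cT a}{1}=\norm{\widetilde{\cT}a}{1}\le C\,\norm{a}{\huG{\ga}}\le C$ uniformly in $a$, so the supremum in \itemno2 is at most $C$.

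For \itemno2 $\Rightarrow$ \itemno1, write $C_0$ for the supremum in \itemno2. Since finite linear combinations of atoms are dense in $\huG{\ga}$ by the very definition of its norm, it suffices to prove the a priori bound $\norm{\cT f}{1}\le C_0\,\norm{f}{\huG{\ga}}$ for every finite combination $f$ and then to extend $\cT$ by continuity (the extension being $\lu{\ga}$-valued thanks to the continuous embedding noted above). For such an $f$, linearity and the triangle inequality give at once $\norm{\cT f}{1}\le C_0\,\norm{f}{\huG{\ga},\mathrm{fin}}$, where $\norm{f}{\huG{\ga},\mathrm{fin}}$ is the infimum of $\sum_j\mod{\la_j}$ over representations of $f$ as a \emph{finite} sum of atoms. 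The matter thus reduces to comparing this finite atomic norm with the genuine norm on the space of finite combinations: one always has $\norm{f}{\huG{\ga}}\le\norm{f}{\huG{\ga},\mathrm{fin}}$, and what is required is the reverse inequality $\norm{f}{\huG{\ga},\mathrm{fin}}\le C\,\norm{f}{\huG{\ga}}$.

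This reverse comparison is the main obstacle, and it is precisely here that the $(1,2)$ nature of the atoms is indispensable. One cannot simply apply the termwise estimate to an arbitrary summable decomposition $f=\sum_j\la_j a_j$, because that would require passing $\cT$ through an infinite sum before its continuity is known; indeed, for $(1,\infty)$-atoms the analogous implication is \emph{false}, by Bownik's counterexample. To overcome this I would follow the argument of Meda, Sjögren and Vallarino: starting from a nearly optimal summable decomposition of a finite combination $f$, one manufactures a \emph{finite} atomic decomposition with comparable coefficient sum, truncating at a large scale and absorbing the tail through a Calderón--Zygmund type stopping-time argument, the truncation errors being summable exactly because the atoms lie in $\ld{\ga}$ (the exponent $q=2<\infty$). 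Equivalently, one shows that every null combination $\sum_j\la_j a_j=0$ in $\huG{\ga}$ obeys $\sum_j\la_j\,\cT a_j=0$ in $\lu{\ga}$, so that the candidate $\widetilde{\cT}f:=\sum_j\la_j\,\cT a_j$ is well defined independently of the decomposition and bounded by $C_0\,\norm{f}{\huG{\ga}}$; granting this, \itemno1 follows.
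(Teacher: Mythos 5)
Your proposal is correct and follows essentially the same route as the paper: the implication \itemno1 $\Rightarrow$ \itemno2 is proved exactly as there (every $\huG{\ga}$-atom has $\huG{\ga}$-norm at most $1$), and for the nontrivial converse the paper, like you, simply adapts the argument of \cite[Thm~4.1]{MSV}, omitting the details. Your extra remarks on reducing to the comparison of finite and infinite atomic norms, and on the essential role of the exponent $q=2$ (Bownik's counterexample for $(1,\infty)$-atoms), are faithful to that argument and provide somewhat more detail than the paper itself.
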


\begin{proof}
Clearly \rmi\ implies \rmii, for every $\huG{\ga}$-atom has
$\huG{\ga}$ norm at most $1$.

The converse in nontrivial.   However, it is not hard to adapt
the proof of \cite[Thm~4.1]{MSV} to the present case.
We omit the details. 
\end{proof}

\begin{definition}
\emph{Suppose that $\cT$ is an operator with kernel $k$. 
We say that $k$ satisfies a \emph{local integral condition
of H\"ormander type} if 
\begin{equation}\label{f: hormander}
H_k
: = \sup_{B}
\sup_{y,y'\in B} \int_{(2B)^c} \mod{k(x,y)-k(x,y')}\wrt\ga(x)
<\infty,
\end{equation}
where $2B$ denotes the ball with the same centre as $B$ and
twice the radius and the supremum is taken with 
respect to all admissible balls.}
\end{definition}

It is known \cite[Thm~7.2]{MM} that
if the kernel $k$ satisfies the local H\"ormander condition above, 
then $\cT$ extends to a bounded operator from $\hu{\ga}$ to $\lu{\ga}$. 
We aim at showing that $\cT$ may be unbounded from $\huG{\ga}$ to $\lu{\ga}$. 
We shall use the following simple criterion.

\begin{proposition}\label{p: kernelL1}
Suppose that $\cT$ is a bounded linear operator on $\ld{\ga}$
with kernel~$k$.
The following hold:
\begin{enumerate}
\item[\itemno1]
if $\cT$ is bounded from $\huG{\ga}$ to $\lu{\ga}$ and $k$ satisfies
the local H\"ormander type condition (\ref{f: hormander}), then 
$k$ satisfies the following estimate
\begin{equation} \label{f: UI at infinity}
I_\infty
: = \sup_{y\in\BR^n} \int_{(2B_y)^c} \mod{k(x,y)}\wrt\ga(x)
<\infty,
\end{equation}
where, for every $y$ in $\BR^n$ we denote by $B_y$ the ball
with centre $y$ and radius $\min(1,1/\mod{y})$;
\item[\itemno2]
if $\cT$ is bounded from $\hu{\ga}$ to $\lu{\ga}$ and
$k$ satisfies (\ref{f: UI at infinity}), then $\cT$ is bounded
from $\huG{\ga}$ to $\lu{\ga}$.
\end{enumerate}
\end{proposition}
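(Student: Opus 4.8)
The plan is to reduce both implications to estimates on a single $\huG{\ga}$-atom, invoking Lemma~\ref{l: MSV} for the second implication. Throughout I would use two elementary geometric facts, both consequences of the bound $\bigl|\,\mod{x}^2-\mod{c_B}^2\,\bigr|\le C$ valid on a fixed dilate of an admissible ball (the same estimate used above to show $x\mapsto x^2\in BMO(\ga)$): first, that $\ga$ is \emph{locally doubling}, i.e. $\ga(NB)\le C_N\,\ga(B)$ for every fixed $N$ and every admissible $B$, since the density is comparable to $\e^{-\mod{c_B}^2}$ there; and second, that if $B$ has radius $\min(1,1/\mod{c_B})$ (such as the support of a global atom) and $z\in B$, then $B_z$ is comparable to $B$, in the sense that $r_{B_z}\le C_0\,r_B$ for a universal $C_0$.

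For \rmi, fix $y$ and test $\cT$ on the global atom $a_y:=\ga(B_y)^{-1}\,\One_{B_y}$, which satisfies $\norm{a_y}{2}=\ga(B_y)^{-1/2}$ and is supported in a ball of radius exactly $\min(1,1/\mod{y})$, hence is a genuine $\huG{\ga}$-atom of norm at most $1$. Boundedness of $\cT$ from $\huG{\ga}$ to $\lu{\ga}$ then gives $\norm{\cT a_y}{1}\le C$ uniformly in $y$. For $x\in(2B_y)^c$ the kernel representation yields $\cT a_y(x)=\ga(B_y)^{-1}\int_{B_y}k(x,z)\wrt\ga(z)$; writing $k(x,z)=k(x,y)+[k(x,z)-k(x,y)]$ and using that $y,z\in B_y$ with $B_y$ admissible, the local H\"ormander condition (\ref{f: hormander}) bounds the average of the difference by $H_k$. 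Integrating in $x$ over $(2B_y)^c$ and applying Fubini to the error term produces $\int_{(2B_y)^c}\mod{k(x,y)}\wrt\ga(x)\le C+H_k$, and the supremum over $y$ is exactly (\ref{f: UI at infinity}).

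For \rmii, by Lemma~\ref{l: MSV} it suffices to bound $\norm{\cT a}{1}$ uniformly over $\huG{\ga}$-atoms. If $a$ is an $\hu{\ga}$-atom this is immediate from the assumed boundedness of $\cT$ from $\hu{\ga}$ to $\lu{\ga}$. If $a$ is a global atom supported in $B$, I would split $\norm{\cT a}{1}$ over $NB$ and $(NB)^c$ with $N=2C_0+1$. On $NB$, Cauchy--Schwarz, the $\ld{\ga}$-boundedness of $\cT$, the size condition, and local doubling give $\int_{NB}\mod{\cT a}\wrt\ga\le\ga(NB)^{1/2}\,\opnorm{\cT}{2}\,\norm{a}{2}\le C$. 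On $(NB)^c$, the kernel representation and Fubini give $\int_{(NB)^c}\mod{\cT a}\wrt\ga\le\int_B\mod{a(z)}\bigl[\int_{(NB)^c}\mod{k(x,z)}\wrt\ga(x)\bigr]\wrt\ga(z)$; since $2B_z\subset NB$, hence $(NB)^c\subset(2B_z)^c$, for every $z\in B$, the inner integral is at most $I_\infty$ by (\ref{f: UI at infinity}), and as $\norm{a}{1}\le\ga(B)^{1/2}\norm{a}{2}\le1$ the whole exterior contribution is at most $I_\infty$.

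The main obstacle is the geometric bookkeeping in \rmii: the tail estimate (\ref{f: UI at infinity}) is anchored at the ball $2B_z$ centred at the integration variable $z$, whereas the natural splitting of $\norm{\cT a}{1}$ is anchored at the fixed support ball $B$. With the plain dilation factor $2$ one is left with an annular region $2B_z\setminus2B$ on which $k(\cdot,z)$ is controlled neither by $I_\infty$ nor by an $\ld{\ga}$ bound; enlarging the factor to $N=2C_0+1$, which is legitimate precisely because $B_z$ and $B$ are comparable for $z\in B$, swallows this region and lets (\ref{f: UI at infinity}) absorb the entire exterior. Verifying the comparability of $B_z$ and $B$ and the local doubling up to the factor $N$ is the only genuinely quantitative step; everything else is the kernel representation together with Fubini.
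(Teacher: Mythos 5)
Your proof is correct and follows essentially the same route as the paper: in part \rmi\ you test $\cT$ on the normalised indicator $\One_{B_y}/\ga(B_y)$ and use the add-and-subtract trick together with (\ref{f: hormander}), and in part \rmii\ you reduce, via Lemma~\ref{l: MSV}, to a near/far splitting of $\norm{\cT a}{1}$ for a global atom, with Cauchy--Schwarz plus local doubling on the near part and Fubini plus (\ref{f: UI at infinity}) on the far part.

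The one place where you deviate---splitting over $NB$ and $(NB)^c$ with $N=2C_0+1$ rather than over $2B$ and $(2B)^c$---is in fact a refinement of the paper's own write-up rather than a detour. In the paper's display (\ref{f: 3}) the integration variable is denoted by the same letter $y$ as the centre of the atom's support ball, and the passage to the bound $I_\infty$ tacitly uses the inclusion $(2B_y)^c\subset(2B_z)^c$ for $z$ in the support ball; this inclusion amounts to $2B_z\subset 2B_y$, which can fail (for instance whenever $\mod{z}\le\mod{y}$ and $z\neq y$, since then $r_{B_z}\ge r_{B_y}$). Your enlargement of the dilation factor, justified by the comparability $r_{B_z}\le C_0\, r_B$ for $z\in B$ and by local doubling up to the factor $N$, is exactly what is needed to make that step airtight, at no cost to the rest of the argument.
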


\begin{proof}
First we prove \rmi.
Since $\cT$ is bounded from $\huG{\ga}$ to $\lu{\ga}$,
the following holds
\begin{equation}
A
:= \sup \{ \norm{\cT a}{1}: \hbox{$a$ is a global atom} \}
< \infty,
\end{equation}
because $\huG{\ga}$-atoms have $\huG{\ga}$-norm at most $1$.
The function $a_{y} = \One_{B_y}/\ga(B_y)$ is a global atom at the scale $1$. 
Notice that
\begin{equation} \label{f: Tay}
\begin{aligned}
\cT a_y (x)
& = \int_{B_y} k(x,v) \, a_y(v) \wrt \ga(v)  \\
& = \int_{B_y} \bigl[k(x,v)-k(x,y)\bigr] \, a_y(v) \wrt \ga(v) 
     + k(x,y) \, \int_{B_y} a_y(v) \wrt \ga(v)  \\
& = \frac{1}{\ga(B_y)} \int_{B_y} \bigl[k(x,v)-k(x,y)\bigr] \wrt \ga(v) 
     + k(x,y).
\end{aligned}
\end{equation}
Thus, 
$$
\begin{aligned}
\int_{(2B_y)^c} \mod{k(x,y)}\wrt\ga(x)
&\leq \sup_{v \in B_y}\int_{(2B_y)^c} \bigmod{k(x,v)-k(x,y)}\wrt \ga(x) 
     +  \norm{\cT a_y}{1}  \\ 
&\leq H_k +  \norm{\cT a_y}{1}.
\end{aligned}
$$
By taking the supremum over $y$ in $\BR^n$, we obtain
$$
\sup_{y\in \BR^n}
\int_{(2B_y)^c} \mod{k(x,y)}\wrt\ga(x)
\leq H_k +  A,
$$
as required.

Now we prove \rmii. 
Since $\cT$ is bounded from $\hu{\ga}$ to $\lu{\ga}$,
$\cT$ is uniformly bounded on $\hu{\ga}$-atoms.  In view
of Lemma~\ref{l: MSV} to prove that $\cT$ is bounded from 
$\huG{\ga}$ to $\lu{\ga}$ it suffices to show that 
$\cT$ is uniformly bounded on global atoms at the scale $1$.

Suppose that $a$ is a global atom at the scale $1$, 
with support contained in $B_y$. 
Clearly
\begin{equation}\label{f: 1}
\norm{\cT a}{1}
= \norm{\ca_{2B_y}\cT a}{1}+\norm{\ca_{(2B_y)^c}\cT a}{1}.
\end{equation}
It is not hard to check that there exists a constant $C$,
independent of $y$ in $\BR^n$, such that
$\ga (2B_y)^{1/2} \leq C \, \ga (B_y)^{1/2}$ (see \cite[prop.~2.1~\rmii]{MM}).
Therefore
$$
\begin{aligned}
\norm{\ca_{2B_y} \, \cT a}{1}
& \leq   \ga (2B_y)^{1/2} \, \,  \norm{\cT a}{2} \\
&\leq C \, \ga (B_y)^{1/2}\,  \, \opnorm{\cT}{2} \, \norm{a}{2}\\
&\leq C \, \opnorm{\cT }{2}.
\end{aligned}
$$
Furthermore
\begin{equation} \label{f: 3} 
\begin{aligned}
\norm{\ca_{(2B_y)^c}\cT a}{1}
&\leq \int_{\BR^n} \wrt\ga(y)\, \mod{a(y)}\int_{(2B_y)^c}
      \mod{k(x,y)}\wrt\ga(x)\\
&\leq I_\infty \, \norm{a}{1}\nonumber\\
&\leq I_\infty.
\end{aligned}
\end{equation}
Hence 
$$
\norm{\cT a}{1} 
\leq  C \, \opnorm{\cT }{2} + I_\infty,
$$
with $C$ independent of $a$, 
as required to conclude the proof of the proposition.
\end{proof}

Fix $u$ in $\BR \setminus \{0\}$ and $r$ in $\BR^+$. 
The kernel $k$ of the operator $(\cL+r\cI)^{iu}$ (with respect to the
Gauss measure) is given~by
\begin{equation} \label{f: nucleo}
k(x,y)
= \frac{1}{\Ga(iu)} \int^\infty_0 t^{-iu-1} \, \e^{-rt} \, h_t(x,y)\wrt t
\quant x,y\in\BR^n,\ x\neq y,
\end{equation}
where $\Ga$ denotes the Euler function and $h_t$ is the Mehler's kernel,
i.e. the kernel of the operator $\exp(-t\cL)$ \cite{GMMST1}
with respect to the Gauss measure.  Recall the formula
\begin{equation} \label{f: Mehler}
h_t(x,y)
= \frac{1}{(1-\e^{-2t})^{d/2}} \, 
\, \exp\Bigl[\mod{y}^2- \frac{\mod{\e^{-t}\, x - y}^2}{1-\e^{-2t}} \Bigr]
\end{equation}
where $t$ is in $\BR^+$ and $x$ and $y$  are in $\BR^n$. 
We perform the change of variables $t=\log((1+s)/(1-s))$
in (\ref{f: nucleo}).  This change of variables, which
was first introduced in \cite{GMMST1}, transforms the Mehler kernel to
\begin{equation} \label{f: Mehler modif}
\wt h_s(x,y)
= \frac{(1+s)^n}{(4s)^{d/2}} \, 
\, \exp\Bigl[\frac{\mod{x}^2+\mod{y}^2}{2} 
- \frac{1}{4} \, (s\mod{x + y}^2 + s^{-1} \, \mod{x-y}^2)\Bigr],
\end{equation}
and the kernel $k$ is expressed via the following formula
\begin{equation} \label{f: nucleo modif}
k(x,y)
= \frac{1}{\Ga(iu)}
 \int^\infty_0 \frac{g_u(s)}{1+s} \,  
\e^{-Q_s(x,y)}\frac{\wrt s}{s^{1/2}}
\quant x,y\in\BR^n,\ x\neq y.
\end{equation}
where $Q_s$ denotes the quadratic form
$$
Q_s(x,y)
= \frac{1}{2} \, (\mod{x}^2+\mod{y}^2)
- \frac{1}{4} \, \Bigl(\frac{\mod{x-y}^2}{s}+s\mod{x+y}^2 \Bigr),
$$
and $g_u: (0,1) \to \BC$ is the function defined by
$$
g_u(s)
= \Bigl[\log\Bigl(\frac{1+s}{1-s}\Bigr)\Bigr]^{-iu-1}.
$$
Following \cite{GMMST1}, for every $a$ in $\BR^+$  define the function $F_a$ 
$$
F_a(s)
= -a(s-1)^2/4s
\quant s \in (0,1)
$$ 
and
$$
I(a,\si)
= \int_0^1 g_u(s) \, \frac{\e^{F_a(s/\si)}}{1+s} \frac{\wrt s}{s^{1/2}}.
$$
It is straightforward, though tedious,
to check that in the case where $n=1$ the following formula holds
\begin{equation} \label{f: kernel Gauss}
k(x,y)
= \e^{y^2} \, I\bigl(\mod{x^2-y^2}, \mod{x-y}/\mod{x+y}\bigr).
\end{equation}
The following lemma, which is reminiscent of \cite[Lemma~4.2]{GMMST1},
will be key to obtain precise estimates of $k$.

\begin{lemma}\label{l: 1}
There exists a positive constant $C$ such that
$$
\mod{I(a,\si)}
\geq \frac{C}{\sqrt{a\, \si}}
\quant a \in [1,\infty) \quant \si \in (0,1/2]. 
$$
\end{lemma}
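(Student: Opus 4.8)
The plan is to treat $I(a,\si)$ as a Laplace-type integral whose mass concentrates near $s=\si$, and then to fight the oscillation well enough to retain a lower bound. Writing $L(s)=\log\frac{1+s}{1-s}$, so that $g_u(s)=L(s)^{-iu-1}$, I would first record the identity
\[
I(a,\si)=\int_0^1 A(s)\,\e^{\,i\psi(s)}\,\e^{F_a(s/\si)}\wrt s,\qquad A(s)=\frac{1}{L(s)\,(1+s)\,s^{1/2}},\quad \psi(s)=-u\log L(s),
\]
where the amplitude $A$ is positive and the phase $\psi$ is real. A short computation gives $F_a(s/\si)=-a\,(s-\si)^2/(4s\si)$, so $s\mapsto\e^{F_a(s/\si)}$ is a Gaussian bump centred at $s=\si$ whose width is comparable to $\si/\sqrt a$; since $\si\le 1/2$ and $a\ge1$, the effective bump sits in a fixed subinterval of $(0,1)$ bounded away from $1$. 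On a fixed-proportion window around the peak $s=\si$ one has $L(s)\asymp\si$, $(1+s)\asymp1$ and $s^{-1/2}\asymp\si^{-1/2}$ (with constants depending only on the fixed $u$), hence $A(s)\asymp\si^{-3/2}$. The naive product of amplitude and width, $\si^{-3/2}\cdot(\si/\sqrt a)=(a\si)^{-1/2}$, already has the asserted order; the entire difficulty is to make this a genuine \emph{lower} bound in the presence of $\e^{i\psi}$.

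To quantify the concentration I would substitute $s=\si(1+v/\sqrt a)$, which normalises the Gaussian to unit width, the exponent becoming $-v^2/\bigl(4(1+v/\sqrt a)\bigr)$. In the regime $\si\to0$ this reduces the estimate, after factoring out the scalars $\si^{1/2}/\sqrt a$ and the unimodular $(2\si)^{-iu}$, to a uniform lower bound for
\[
K(a)=\int_{-\sqrt a}^{\infty}(1+v/\sqrt a)^{-3/2-iu}\,\e^{-v^2/(4(1+v/\sqrt a))}\wrt v,
\]
namely it would suffice that $\inf_{a\ge1}\mod{K(a)}>0$. The factor $(1+v/\sqrt a)^{-iu}$ is the only one of modulus different from the positive weight, and as $a\to\infty$ it tends to $1$, so $K(a)\to\int_{-\infty}^{\infty}\e^{-v^2/4}\wrt v=2\sqrt\pi\neq0$. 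The device I would use throughout is the rotation bound $\mod{I(a,\si)}\ge\Re\bigl(\omega\,I(a,\si)\bigr)=\int_0^1 A(s)\cos\bigl(\psi(s)+\arg\omega\bigr)\e^{F_a(s/\si)}\wrt s$ for any unimodular $\omega$, choosing $\arg\omega=-\psi(\si)$ so that the cosine is close to $1$ at the peak.

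For \emph{large} $a$ this is clean: the bump has $s$-width $\si/\sqrt a$, and since $\psi'(s)=-u\,L'(s)/L(s)$ gives a phase variation of order $\mod{u}\,T/\sqrt a$ over a window $s\in[\si,\si(1+T/\sqrt a)]$, the phase is essentially constant over the whole effective support once $a$ exceeds a threshold $A_0=A_0(u)$. Choosing $T=T(u)$ large enough that the Gaussian tail outside the window is a small fraction of its core, the rotated integrand is positive and comparable to $\si^{-3/2}$ on the core, and one obtains $\mod{I(a,\si)}\ge c\,(a\si)^{-1/2}$ directly, uniformly in $\si$.

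The main obstacle is the complementary regime $a\in[1,A_0]$. There the bump is \emph{wide} compared with the phase-coherence scale, so $\e^{i\psi}$ genuinely oscillates across it and the Gaussian tail outside any phase-coherent subinterval carries comparable mass; one therefore cannot simply discard it, and the real content is to exclude accidental cancellation, i.e. to show that the leading oscillatory integral (and $I(a,\si)$ itself) never vanishes. I would resolve this by a compactness/continuity argument: $\sqrt{a\si}\,\mod{I(a,\si)}$ is continuous and strictly positive on $[1,\infty)\times(0,1/2]$, it tends to a strictly positive limit as $a\to\infty$ uniformly in $\si$ (by the coherence estimate above, using that $\si/L(\si)$ and $1/(1+\si)$ are bounded below on $(0,1/2]$) and to $\tfrac12\mod{K(a)}$ as $\si\to0$ uniformly in $a$; combined with non-vanishing on the compact interior range this forces a positive infimum, which is the desired $C$. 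The delicate point — verifying the non-vanishing uniformly, i.e. that the oscillation never conspires to kill the main term — is exactly the crux, and I would handle it along the lines of the analogous Lemma~4.2 of \cite{GMMST1} on which the present statement is modelled.
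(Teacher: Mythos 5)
Your set-up and your large-$a$ argument are, in substance, the paper's own proof in different clothing. The paper freezes the oscillating factor at the peak: it writes $I=J+H$, where $J(a,\si)=g_u(\si)\int_{\si/2}^{2/3}\frac{\e^{F_a(s/\si)}}{1+s}\,\frac{\wrt s}{s^{1/2}}$, lower-bounds this non-oscillatory term by $\mod{J(a,\si)}\geq C\,(a\si)^{-1/2}$, and upper-bounds $H=I-J$ (two exponentially small tails plus the core term carrying the factor $g_u(s)-g_u(\si)$, estimated by the mean value theorem using $\mod{g_u'}\leq C_u\,\si^{-2}$ on the core) by $\mod{H(a,\si)}\leq M\,a^{-1}\si^{-1/2}$. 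Your phase-variation bound of order $\mod{u}\,T/\sqrt a$ across the bump is exactly this mean-value estimate, and your rotation device is an equivalent substitute for the triangle inequality $\mod{I}\geq\mod{J}-\mod{H}$. Both routes end with a bound of the shape $\mod{I(a,\si)}\geq(a\si)^{-1/2}\bigl(C-M\,a^{-1/2}\bigr)$, which is effective once $a$ exceeds a threshold depending on $u$.

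The genuine gap is the regime you yourself single out as the crux, $a\in[1,A_0]$, and your proposal does not prove it. Continuity and compactness only convert \emph{pointwise} non-vanishing into a uniform bound; the actual content in that regime is the non-vanishing itself, i.e. that $K(a)\neq0$ for every $a$ (together with the two uniform limits you assert but do not verify). Since $K(a)=2\sqrt a\,\e^{a/2}\,K_{1/2+iu}(a/2)$, a Macdonald function of complex order, this is a genuine statement about zeros of Bessel functions, and deferring it to ``the analogous Lemma~4.2 of \cite{GMMST1}'' cannot close it: that lemma is the model for the present one and is proved by the same freezing/coherence argument, which — as you correctly diagnose — only bites for large $a$; it contains no non-vanishing theorem to quote, so the citation is circular. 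To finish along your lines you would need to import independent input, e.g.\ P\'olya's classical theorem that for $x>0$ all zeros of $\nu\mapsto K_\nu(x)$ are purely imaginary (hence $K_{1/2+iu}(x)\neq0$). It is only fair to add that the paper's closing assertion that the estimate ``follows directly from the claim'' has the same blind spot, since $C-M\,a^{-1/2}$ need not be positive near $a=1$; but the paper does give a complete, self-contained quantitative argument in the regime its claim actually covers, whereas your write-up leaves its self-declared key point resting on a reference that cannot support it.
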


\begin{proof}
It will be convenient to define two more functions, $J$ and $H$,
by the formulae
\begin{equation} \label{f: defin J e H}
J(a,\si)
= g_u(\si) \, \int^{2/3}_{\si/2}\frac{\e^{F_a(s/\si)}}{1+s} \, 
\frac{\wrt s}{s^{1/2}}
\qquad\hbox{and} \qquad
H(a,\si) = I(a,\si) - J(a,\si).
\end{equation}
We claim that there exist $C, M>0$ such that
$$
\mod{J(a,\si)}
\geq \frac{C}{\sqrt{a\si}} 
\qquad\hbox{and}\qquad
\mod{H(a,\si)}\leq \frac{M}{a\, \sqrt{\si}}.
$$
The required estimate on $I$ will follow directly from
the claim.  

To prove the claim, define $H^1$, $H^2$ and $H^3$ by
$$
H^1(a,\si)
= \int_0^{\si/2} g_u(s)\, \frac{\e^{F_a(s/\si)}}{1+s} 
\, \frac{\wrt s}{s^{1/2}}
\qquad\qquad
H^2(a,\si)
= \int^1_{2/3} g_u(s)\, \frac{\e^{F_a(s/\si)}}{1+s} \, \frac{\wrt s}{s^{1/2}}
$$
and 
$$
\begin{aligned}
H^3(a,\si)
= \int^{2/3}_{\si/2}\left(g_u(s)-g_u(\si)\right)
\, \frac{\e^{F_a(s/\si)}}{1+s} \, \frac{\wrt s}{s^{1/2}}.
\end{aligned}
$$
Clearly $H = H^1+H^2+H^3$. 
Since $\si\leq 1/2$, there exist positive constants $C$ and $c$ such that
\begin{equation}\label{f: H1}
\begin{aligned}
\mod{H^1(a,\si)}
&\leq \int^{\si/2}_0
     \Bigl[\log\Bigl(\frac{1+s}{1-s}\Bigr)\Bigr]^{-1} \frac{s^{-1/2}}{1+s} \, 
     \e^{F_a(s/\si)}\wrt s\\
&\leq C\,\int^{\si/2}_0 s^{-3/2} \, \e^{F_a(s/\si)}\wrt s \\
&\leq C\,\int^{\si/2}_0 s^{-3/2} \, \e^{-ca\si/s}\wrt s\\
&\leq C\,(a\si)^{-1/2}\int^\infty_{2ca}s^{-1/2} \, \e^{-s}\wrt s\\
&\leq C\, a^{-1}\, \si^{-1/2} \, \e^{-ca}.
\end{aligned}
\end{equation}
A similar computation shows that for every 
$a\geq 1$ and $\si\leq 1/2$ there exists $C>0$ such that
\begin{equation}\label{f: H2}
\mod{H^2(a,\si)}\leq C\, a^{-1}\,\si^{-1/2}.
\end{equation}
Now we estimate $H^3(a,\si)$. 
Note that there exists $C>0$ such that
\begin{equation}\label{f: gder}
\Bigmod{\frac{\wrt}{\wrt s}g_u(s)}
\leq C_u \, \Bigl[\log\Bigl(\frac{1+\si/2}{1-\si/2}\Bigr)\Bigr]^{-2}
\quant s\in(\si/2,2/3).
\end{equation}
Hence, by the mean value theorem, we have that
\begin{equation}\label{f: H3}
\begin{aligned}
\mod{H^3(a,\si)}
&\leq C_u\, \int^{2/3}_{\si/2} \mod{s-\si} \, 
    \Bigl[\log\Bigl(\frac{1+\si/2}{1-\si/2}\Bigr)\Bigr]^{-2}
    \frac{s^{-1/2}}{1+s} \, \e^{F_a(s/\si)}\wrt s\\
&\leq C\, \si^{-2}\int^{2/3}_{\si/2} \mod{s-\si}\, s^{-1/2}\,  \e^{F_a(s/\si)}
   \wrt s\\
&= C\,  \si^{-1/2}\int^{2/3\si}_{1/2}\mod{s-1}\,  \e^{F_a(s)}\wrt s\\
&\leq C \, \si^{-1/2}\int^{+\infty}_{1/2}\mod{s-1}\,  \e^{F_a(s)}\wrt s\\
&\leq C \, a^{-1}\si^{-1/2}.
\end{aligned}
\end{equation}
By combining (\ref{f: H1}), 
(\ref{f: H2}) and (\ref{f: H3}) we get the desired estimate for $H(a,\si)$.

Now we estimate $J(a,\si)$.  Observe that
$$
\begin{aligned}
\mod{J(a,\si)}
& =   \mod{g_u(\si)}\int^{2/3}_{\si/2}\frac{s^{-1/2}}{1+s}
     \, \e^{F_a(s/\si)}\wrt s\\
& \geq C \, \mod{g_u(\si)}\int^{2/3}_{\si/2} \e^{F_a(s/\si)}\wrt s\\
& \geq C \, \Bigl[\log\Bigl(\frac{1+\si}{1-\si}\Bigr)\Bigr]^{-1} \,
    \sqrt{\frac{\si}{a}}\\
& \geq C \, (a\, \si)^{-1/2},
\end{aligned}
$$
as required.
\end{proof}

\begin{theorem}\label{t: 1}
For each $u$ in $\BR\setminus\{ 0\}$ and for each $r$ in $\BR^+$
the operator $(r\cI+\cL)^{iu}$ is unbounded from $\huG{\ga}$ to $\lu{\ga}$.
\end{theorem}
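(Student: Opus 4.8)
The plan is to argue by contradiction and to reduce everything to a single pointwise lower bound for the kernel. Write $\cT=(r\cI+\cL)^{iu}$ and let $k$ be its kernel with respect to $\ga$. By \cite[Thm~7.2]{MM} the kernel $k$ satisfies the local H\"ormander condition \eqref{f: hormander} (this is precisely what makes $\cT$ bounded from $\hu{\ga}$ to $\lu{\ga}$). Hence, if $\cT$ were bounded from $\huG{\ga}$ to $\lu{\ga}$, Proposition~\ref{p: kernelL1}~\rmi\ would force the uniform integrability estimate \eqref{f: UI at infinity}, i.e.\ $I_\infty<\infty$. Thus it suffices to prove that $\int_{(2B_y)^c}\mod{k(x,y)}\wrt\ga(x)$ is \emph{unbounded} as $\mod{y}\to\infty$; this contradicts $I_\infty<\infty$ and proves the theorem.

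I first carry this out for $n=1$, taking $y>0$ large, so that $B_y$ has radius $1/y$. The idea is to bound $\mod{k(x,y)}$ from below on the interval $x\in[y/2,y-2/y]\subset(2B_y)^c$. There one has $a:=\mod{x^2-y^2}=(y-x)(y+x)\ge 1$ and $\si:=\mod{x-y}/\mod{x+y}=(y-x)/(y+x)\le 1/3$, so Lemma~\ref{l: 1} applies and yields $\mod{I(a,\si)}\ge C\,(a\si)^{-1/2}$. The decisive algebraic point is the identity $a\,\si=\mod{x-y}^2$, so that $(a\si)^{-1/2}=1/(y-x)$.

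Next I must supply the correct exponential prefactor, and this is where care is needed. Since $\mod{x}<\mod{y}$ on the interval, one cannot use \eqref{f: kernel Gauss} verbatim: written with the factor $\e^{y^2}$, that formula is valid only when $\mod{y}\le\mod{x}$. Instead I use that $k$ is symmetric, $k(x,y)=k(y,x)$; indeed $\e^{-t\cL}$ is self-adjoint on $\ld{\ga}$, so its kernel $h_t$ is symmetric, and $k$ is obtained from the $h_t$ by integration against a scalar, see \eqref{f: nucleo}. Applying \eqref{f: kernel Gauss} to $k(y,x)$ then gives $\mod{k(x,y)}=\e^{x^2}\,\mod{I(a,\si)}/\mod{\Ga(iu)}$ on the interval, whence
$$
\mod{k(x,y)}\ge \frac{C\,\e^{x^2}}{y-x}
\qquad\text{for } x\in[y/2,y-2/y].
$$
The whole point of the prefactor $\e^{x^2}$ is that it cancels the Gaussian weight $\e^{-x^2}$ of $\wrt\ga$. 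Integrating and substituting $t=y-x$,
$$
\int_{(2B_y)^c}\mod{k(x,y)}\wrt\ga(x)
\ge C\int_{y/2}^{y-2/y}\frac{\e^{x^2}}{y-x}\,\e^{-x^2}\wrt x
= C\int_{2/y}^{y/2}\frac{\wrt t}{t}
= C\,\log\frac{y^2}{4},
$$
which tends to $\infty$ as $y\to\infty$, giving the desired contradiction.

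For general $n$ the strategy is identical: take $y=\mod{y}\,\vet{e}_1$, restrict the integral to a suitable region near the segment from the origin to $y$, and use the $n$-dimensional Mehler kernel. The quadratic form $Q_s$ depends only on $\mod{x-y}$ and $\mod{x+y}$, so the passage to \eqref{f: nucleo modif} carries over with $s^{-1/2}$ replaced by $s^{-n/2}$, and the analogue of Lemma~\ref{l: 1} again produces a lower bound in which the exponential prefactor cancels the Gaussian weight, leaving a divergent integral. I expect the main obstacle to be precisely this bookkeeping of the prefactor: one must keep track of the fact that the exponent is governed by $\min(\mod{x}^2,\mod{y}^2)$ — equivalently, use \eqref{f: kernel Gauss} only for $\mod{y}\le\mod{x}$ and invoke the symmetry of $k$ otherwise — because it is exactly the cancellation of this factor against $\wrt\ga$ that turns an innocuous-looking integral into a divergent one.
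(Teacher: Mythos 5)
Your proposal is correct and follows essentially the same route as the paper's proof: assume boundedness, invoke Proposition~\ref{p: kernelL1}~\rmi\ (with the H\"ormander condition from \cite[Thm~7.2]{MM}) to force the uniform estimate \pref{f: UI at infinity}, and contradict it by bounding $\mod{k(x,y)}$ from below on an interval to the left of $y$ via Lemma~\ref{l: 1} and the identity $a\si = \mod{x-y}^2$, yielding a divergent logarithmic integral. Your extra care with the exponential prefactor---observing that \pref{f: kernel Gauss} as written presupposes $\mod{y}\le\mod{x}$ and using the symmetry of $k$ to replace $\e^{y^2}$ by $\e^{x^2}=\e^{\min(x^2,y^2)}$ on the region $x<y$---is a sound refinement of a point the paper glosses over, but it does not change the substance of the argument, since either version of the prefactor dominates the Gaussian weight there and produces the same lower bound.
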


\begin{proof}
We prove the result when $r=1$.  The modifications needed
to prove the result for $r>0$ are straighforward
and omitted.

A slight modification of the proof of \cite[Thm~7.2]{MM} shows that 
the kernel $k$ of $(\cI+\cL)^{iu}$ 
satisfies H\"ormander's type condition (\ref{f: hormander}). 
Thus, by Proposition~\ref{p: kernelL1}, 
to prove the theorem it suffices to show that
\begin{equation}\label{f: kernelatinfity}
\lim_{\mod{y}\to\infty}  \int_{(2B_y)^c} \mod{k(x,y)} \wrt\ga(x)
=  \infty,
\end{equation}
where $B_y$ denotes the ball with centre $y$ and radius $\min(1,1/\mod{y})$.

We shall give the details only in the case where $n=1$. 
The proof in the case where $n\geq 2$ is more technical,
but it follows the same lines.  See also the proof of 
\cite[Proposition~4.4]{GMMST1}, where similar computations
are made in all dimensions and the differences between
the one dimensional and the higher dimensional cases are explained in detail.  

By (\ref{f: kernel Gauss}), it suffices to prove that the function
$$
y\mapsto 
\int_{(2B_y)^c} \bigmod{I\bigl(\mod{x^2-y^2},\mod{x-y}/\mod{x+y}\bigr)} 
\wrt x 
$$
is unbounded.  We may restrict the domain of integration
to the set where $y$ is large and positive, and
$x$ is in the interval $(y-1,y-2/y)$.
Then we must prove that
\begin{equation} \label{f: last comp}
\lim_{y \to \infty}
\int_{y-1}^{y-2/y} \bigmod{I\bigl({y^2-x^2},(y-x)/(x+y)\bigr)}  
\wrt x  = \infty.
\end{equation}
Note that in the interval $(y-1,y-2/y)$
$$
\bigmod{I\bigl({y^2-x^2},(y-x)/(x+y)\bigr)} 
\geq C \, (y-x)^{-1}.
$$
Indeed, in that interval $y^2-x^2 \geq (2/y) \, (x+y)\geq 2$,
and $(y-x)/(x+y)\leq 1/2$, so that Lemma~\ref{l: 1} may be applied,
and the estimate above follows.
Therefore the limit in (\ref{f: last comp}) is estimated
from below by 
$$
\lim_{y\to \infty}
\int_{y-1}^{y-2/y} (y-x)^{-1}  \wrt x  = \infty,
$$
as required to conclude the proof of the theorem.
Now (\ref{f: last comp}) follows directly from this estimate.
\end{proof}

\section{Measured metric spaces} \label{s: Measured}

We recall briefly the relevant definition and
refer to \cite{CMM1,CMM2} and the references therein
for every unexplained notation and terminology and for more on
measured metric spaces.   

Suppose that $(M,\rho,\mu)$ is a measured metric space. 
In particular, we assume that $(M,\rho)$ is a metric space, that $\mu$
is a regular Borel measure on $M$
with the property that $\mu(M) > 0$ and every ball has finite measure.
We assume throughout that $M$ is \emph{unbounded}. 
We denote by $\cB$ the family of all balls on $M$.
For each $B$ in $\cB$ we denote by $c_B$ and $r_B$
the centre and the radius of $B$ respectively,
and by $\kappa \, B$ the
ball with centre $c_B$ and radius $\kappa \, r_B$.
For each $b$ in $\BR^+$, we denote by $\cB_b$ the family of all
balls $B$ in $\cB$ such that $r_B \leq b$.  
For any subset $A$ of $M$ and each $\kappa$ in $\BR^+$
we denote by $A_{\kappa}$ and $A^{\kappa}$ the sets
$$
\bigl\{x\in A: \rho(x,A^c) \leq \kappa\bigr\}
\qquad\hbox{and}\qquad
\bigl\{x\in A: \rho(x,A^c) > \kappa\bigr\}
$$
respectively.

We say that the measured metric space $(M,\rho,\mu)$
possesses the \emph{local doubling property} (LDP) if
for every $b$ in $\BR^+$ there exists a constant $D_b$ 
such that
\begin{equation}  \label{f: LDC} 
\mu \bigl(2 B\bigr)
\leq D_b \, \mu  \bigl(B\bigr)
\quant B \in \cB_b. 
\end{equation}

We say that the measured metric space $(M,\rho,\mu)$
with $\mu(M) = \infty$
possesses the \emph{isoperimetric property} \hbox{\PP} if
there exist $\kappa_0$ and $C$ in $\BR^+$ such that
for every bounded open set $A$
\begin{equation}
\label{f: IP}
\mu \bigl(A_{\kappa}\bigr)
\geq C \, \kappa\, \mu (A) \quant \kappa\in (0,\kappa_0]. 
\end{equation}

It is known \cite{CMM1} that if $M$ is a complete Riemannian manifold,  the 
isoperimetric property (defined in terms of the Riemannian distance
and the Riemannian volume) is equivalent to the positivity of 
Cheeger's isoperimetric costant $h(M)$, defined by 
$$
h(M)
= \inf  \, \frac{\sigma \bigl(\partial(A)\bigr)}{\mu(A)},
$$
where the infimum runs over all bounded open sets $A$ with smooth
boundary.
Here $\sigma$ denotes the induced Riemannian measure on $\partial A$. 
Moreover, if the Ricci curvature of $M$ is bounded from below, both properties are 
equivalent to the existence of a spectral gap for the Laplacian.

The analogue of the isoperimetric property for measured
metric spaces of finite measure is the so-called
complementary isoperimetric inequality, which we now define.
We say that a measured metric space $(M,\rho,\mu)$ of finite measure
possesses the \emph{complementary isoperimetric property} \hbox{\PPc} if
there exist a ball $B_0$ in $M$, 
$\kappa_0$ and $C$ in $\BR^+$ such that
for every bounded open set $A$ contained in $M\setminus \Bar B_0$
\begin{equation}\label{PIc}
\mu \bigl(A_{\kappa}\bigr)
\geq C\, \kappa \, \mu (A) 
\quant \kappa \in (0,\kappa_0]. 
\end{equation}

We say that the measured metric space $(M,\rho,\mu)$
possesses the \emph{property} \hbox{\EB} (approximate midpoint property)
if there exist $R_0$ in $[0,\infty)$ and 
$\be$ in $(1/2,1)$ such that for every pair of points~$x$ and $y$
in $M$ with $\rho(x,y) > R_0$ there exists a point $z$ in $M$ such that
$\rho(x,z) < \beta\, \rho (x,y)$ and $\rho(y,z) < \beta\, \rho (x,y)$.

Clearly every length measured metric space possesses property (AMP).
The measured metric space $(\BR^n, \rho', \ga)$ ($\rho'$ is 
as in (\ref{f: rho primo})) is a locally doubling measured metric space
with the complementary isoperimetric and the approximate
midpoint property.

We briefly recall the definition of the Hardy space $\hu{\mu}$
in this setting \cite{CMM1,CMM2}. 

\begin{definition} \label{d: standard atom}
\emph{A (standard) \emph{atom} 
$a$ is a function in $\ld{\mu}$ supported in a ball $B$ in $\cB$
such that
$$
\norm{a}{2}  \leq \mu (B)^{-1/2}
\qquad\hbox{and}\qquad
\int_B a \wrt \mu  = 0.
$$}
\end{definition}

\begin{definition} \label{d: Hardy}
\emph{Suppose that $\mu(M) = \infty$.
The \emph{Hardy space} $\hu{\mu}$ is the 
space of all functions~$g$ in $\lu{\mu}$
that admit a decomposition of the form
$$
g = \sum_{k=1}^\infty \la_k \, a_k,
$$
where $a_k$ is an atom \emph{supported in a ball $B$ of radius 
at most $1$},
and $\sum_{k=1}^\infty \mod{\la_k} < \infty$.
The norm $\norm{g}{\hu{\mu}}$
of $g$ is the infimum of $\sum_{k=1}^\infty \mod{\la_k}$
over all decompositions of $g$ as above.}
\end{definition}

In the case where $\mu$ is finite in addition to the standard
atoms defined above there is also an
\emph{exceptional atom}, i.e. the constant function
$1/\mu(M)$.  The \emph{Hardy space} $\hu{\mu}$ is 
defined as in the case where $\mu(M) = \infty$,
but now atoms are either standard atoms or the
exceptional atom.
These atoms will be referred to as $\hu{\mu}$-atoms.

To avoid technicalities we assume throughout that $R_0/(1-\be) < 1$.  
In view of \cite[Prop.~4.3]{CMM1} and \cite[Prop.~3.4~\rmi]{CMM2} this ensures that
the Hardy space $\hu{\mu}$ defined above is scale invariant
in the following sense.  For $b> (R_0/(1-\be)$
we may consider an Hardy space $H_b^1(\mu)$
defined as in Definition~\ref{d: Hardy}, but where atoms are
supported in balls of radius at most $b$ instead that $1$.  With this notation
the space $\hu{\mu}$ defined above would be denoted by $H_1^1(\mu)$.
It is a nontrivial fact that 
the spaces $H_1^1(\mu)$ and $H_b^1(\mu)$ agree as vector spaces
and that their norms are equivalent. 

Now we define the Goldberg type space $\huG{\mu}$ in this setting.

\begin{definition} 
\emph{A \emph{global} atom $a$ (at the scale $1$) 
is a function in $\ld{\mu}$ with support contained 
in a ball $B$ of radius exactly equal to $1$ such that 
$$
\norm{a}{2}\leq \mu(B)^{-1/2}.
$$
An $\huG{\mu}$-atom is either an $\hu{\mu}$-atom or a global atom.}
\end{definition}

\begin{definition}
\emph{The Hardy space of Goldberg type $\huG{\mu}$ is the vector
space of all functions~$f$ which admits a decompositions of the form
\begin{equation} \label{f: Goldberg space I}
f
=  \sum_j \la_j \, a_j,
\end{equation}
where the sequence $\{\la_j\}$ is summable 
and the $a_j$'s are $\huG{\mu}$-atoms.
The norm of~$f$ in $\huG{\mu}$ is the infimum of $\sum_j \mod{\la_j}$
as $\{\la_j\}$ varies over all decompositions 
(\ref{f: Goldberg space I}) of $f$.}
\end{definition}

If $\mu$ is infinite, then $\hu{\mu}$ is contained in 
the space of integrable functions with integral $0$.
Since global atoms in $\huG{\mu}$ are integrable functions
with possibly nonzero integral, the strict inclusion
$\hu{\mu} \subset \huG{\mu}$ holds also in this case.

An equivalent space has been defined and studied on Riemannian
manifolds with  bounded geometry by M.~Taylor~\cite{T}.
In fact, the definition of Taylor is different
from that adopted above (see \cite[Section~2]{T}),
but it is straightforward to check
that the two definitions are equivalent, i.e., the corresponding
spaces agree, with equivalent norms.   

Assume that $\cT$ is a bounded linear operator on 
$\ld{\mu}$ with kernel $k$ (see Section~\ref{s: Notation} for the definition).
In \cite[Thm~8.2]{CMM1} it has been proved that, 
if $k$ satisfies the following \emph{local H\"ormander type condition}
\begin{equation}\label{f: hormander I}
H_k
= \sup_{B} \, \sup_{y,y'\in B}
\int_{(2B)^c} \mod{k(x,y)-k(x,y')}\wrt\mu (x)
<\infty,
\end{equation}
where the supremum is taken over all balls $B$ of radius at most $1$,
then $\cT$ extends to a bounded operator from $\hu{\mu}$ to $\lu{\mu}$. 
It is natural to speculate under what conditions the operator 
$\cT$ extends to a bounded operator from $\huG{\mu}$ to $\lu{\mu}$. 
The following is the analogue of Proposition~\ref{p: kernelL1} above.

\begin{proposition}\label{p: kernelL1 mms}
Suppose that $\cT$ is a bounded linear operator on $\ld{\mu}$
with kernel~$k$.
The following hold:
\begin{enumerate}
\item[\itemno1]
if $\cT$ is bounded from $\huG{\mu}$ to $\lu{\mu}$ and $k$ satisfies
the local H\"ormander type condition (\ref{f: hormander I}), then 
$k$ satisfies the following estimate
\begin{equation}\label{f: kernelinL1 I}
I_\infty
:= \sup_{y\in M} \int_{B(y,2)^{c}} \mod{k(x,y)}\wrt\mu(x)
<\infty;
\end{equation}
\item[\itemno2]
if $\cT$ is bounded from $\hu{\mu}$ to $\lu{\mu}$ and
$k$ satisfies (\ref{f: kernelinL1 I}), then $\cT$ is bounded
from $\huG{\mu}$ to $\lu{\mu}$.
\end{enumerate}
\end{proposition}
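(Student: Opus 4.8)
The plan is to transcribe, almost verbatim, the proof of Proposition~\ref{p: kernelL1}, replacing the Gauss measure $\ga$ by $\mu$, the ball $B_y$ by the ball $B(y,1)$ of radius exactly $1$ centred at $y$, the H\"ormander condition \pref{f: hormander} by its analogue \pref{f: hormander I}, and the ad hoc Gauss inequality $\ga(2B_y)^{1/2}\le C\,\ga(B_y)^{1/2}$ by the local doubling property \pref{f: LDC}, which is among the standing hypotheses on $(M,\rho,\mu)$.

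For \rmi\ I would proceed exactly as in Proposition~\ref{p: kernelL1}. Boundedness of $\cT$ from $\huG{\mu}$ to $\lu{\mu}$ gives $A:=\sup\set{\norm{\cT a}{1}:\hbox{$a$ a global atom}}<\infty$, since global atoms have $\huG{\mu}$-norm at most $1$. Testing against the specific global atom $a_y=\One_{B(y,1)}/\mu(B(y,1))$ and using $\int a_y\wrt\mu=1$, the cancellation identity \pref{f: Tay} becomes
$$
\cT a_y(x)=\frac{1}{\mu(B(y,1))}\int_{B(y,1)}\bigl[k(x,v)-k(x,y)\bigr]\wrt\mu(v)+k(x,y).
$$
Integrating $\mod{k(x,y)}$ over $B(y,2)^c=(2B(y,1))^c$, the first term is controlled by the H\"ormander constant $H_k$ of \pref{f: hormander I}---which applies because $B(y,1)$ has radius exactly $1$, hence is admissible for the supremum in \pref{f: hormander I}, and $y,v\in B(y,1)$---while the remaining term is $\norm{\cT a_y}{1}\le A$. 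Taking the supremum over $y$ yields $I_\infty\le H_k+A<\infty$.

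For \rmii\ I would first invoke the $\huG{\mu}$-analogue of Lemma~\ref{l: MSV}, obtained by the same adaptation of \cite[Thm~4.1]{MSV}, which reduces boundedness of $\cT$ from $\huG{\mu}$ to $\lu{\mu}$ to uniform boundedness on $\huG{\mu}$-atoms. The hypothesis that $\cT$ is bounded from $\hu{\mu}$ to $\lu{\mu}$ handles the $\hu{\mu}$-atoms, so only global atoms remain. If $a$ is a global atom supported in $B(y,1)$, I would split $\norm{\cT a}{1}$ over $B(y,3)$ and its complement. The near term is estimated by Cauchy--Schwarz, the $\ld{\mu}$-boundedness of $\cT$, the size bound $\norm{a}{2}\le\mu(B(y,1))^{-1/2}$, and two applications of \pref{f: LDC} (with $b=1$ and $b=2$), giving $\norm{\ca_{B(y,3)}\cT a}{1}\le C\,\opnorm{\cT}{2}$ with $C$ independent of the atom. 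For the far term, Fubini gives
$$
\norm{\ca_{B(y,3)^c}\cT a}{1}\le\int_{B(y,1)}\mod{a(v)}\int_{B(y,3)^c}\mod{k(x,v)}\wrt\mu(x)\wrt\mu(v),
$$
and since $v\in B(y,1)$ forces $B(y,3)^c\subseteq B(v,2)^c$, the inner integral is at most $I_\infty$; with $\norm{a}{1}\le 1$ this bounds the far term by $I_\infty$. Summing the two estimates proves uniform boundedness on global atoms.

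The proof is therefore routine once the correct substitutions are made. The only genuinely nontrivial ingredient, exactly as in the Gauss case, is the $\huG{\mu}$-version of Lemma~\ref{l: MSV}: passing from uniform control on atoms to boundedness on all of $\huG{\mu}$ requires knowing that the atomic series \pref{f: Goldberg space I} can be summed after applying $\cT$, which is where \cite[Thm~4.1]{MSV} enters. The one point of bookkeeping special to the metric setting is the choice of splitting radius in \rmii: one must enlarge the inner ball from $B(y,2)$ to $B(y,3)$ so that the far region sits inside $B(v,2)^c$ for every $v$ in the support of the atom, the resulting enlargement being absorbed by iterating the local doubling property.
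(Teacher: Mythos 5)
Your proposal is correct and is essentially the paper's own proof: the paper simply states that the argument is, \emph{mutatis mutandis}, that of Proposition~\ref{p: kernelL1} with $B_y$ replaced by $B(y,1)$, which is exactly the transcription you carry out. If anything, your version is more careful than the paper's terse reference, since your splitting at $B(y,3)$ (so that the far region lies in $B(v,2)^c$ for every $v$ in the atom's support, with the enlargement absorbed by the local doubling property) patches a small bookkeeping point that a literal word-for-word substitution in the Gauss-case proof would leave implicit.
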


\begin{proof}
The proof is, \emph{mutatis mutandis}, the same as the proof
of Proposition~\ref{p: kernelL1}.  We only need to replace
the ball $B_y$ in that proof with the ball $B(y,1)$.   We omit
the details.
\end{proof}

\subsection{Homogeneous trees}

We now show that there are cases in which boundedness 
from $\hu{\mu}$ to $\lu{\mu}$ is equivalent
to boundedness from $\huG{\mu}$ to $\lu{\mu}$.
This is in sharp contrast with the 
case of the Gauss measure which has been
analysed in the Section~\ref{s: Imaginary powers}. 

Denote by $\fX$ a homogeneous tree, i.e., a graph, with no
loops, in which every vertex $x$ has the same number,
$q+1$ say, of adjacent vertices, called nearest neighbours of $x$.  
When $x$ and $y$ are adjacent vertices, we shall write $x\sim y$.
Denote by $\mu$ the \emph{counting measure} on $\fX$,
and by $\rho$ one-half of the natural distance on $\fX$.  
Thus, two adjacent vertices have distance $1/2$.  
The reason for this apparently unnatural definition of distance
is that if the distance of two adjacent vertices were equal to $1$,
then the only atom supported on any ball of radius at most $1$
would be the trivial atom.   
We could, of course, consider
balls of radius at most $2$, but then this would 
require new definitions and there would not be uniformity
with the Gaussian case and the case of manifolds.  

Denote by $G$ the group of isometries of $\fX$
(see \cite{FTN} for information on $G$) and fix a reference 
point $o$ in $\fX$.  We shall consider
only $G$-invariant linear operators acting
on function spaces on $\fX$.  If $\cT$ is such an operator,
then its kernel $k$ satisfies the following
$$
k(x,y)
= k(g\cdot x, g \cdot y)
\quant g \in G \quant x,y \in \fX,
$$
so that $k(x,y)$ depends, in fact, only on $\rho(x,y)$.
As a consequence, the local H\"ormander type condition
(\ref{f: hormander I}) may be reformulated thus
\begin{equation} \label{f: LHIC reform}
\max_{y\sim o}
\sum_{x: \rho(x,o) \geq 2} \mod{k(x,y)-k(x,o)}
< \infty.
\end{equation}

\begin{proposition}
Suppose that $\cT$ is a $G$-invariant linear operator defined on 
functions on $\fX$ with finite support and denote by $k$ its kernel.
The following hold:
\begin{enumerate}
\item[\itemno1]
if $\cT$ extends to a bounded operator from $\hu{\fX}$
to $\lu{\fX}$, then $k$ satisfies a
H\"ormander type integral condition;
\item[\itemno2]
if $k$ satisfies a local H\"ormander type integral condition, then 
$$
\sum_{x \in \fX} \mod{k(x,o)}
< \infty.
$$
Hence $\cT$ is bounded on $\lu{\fX}$;
\item[\itemno3]
$\cT$ extends to a bounded operator from $\hu{\fX}$
to $\lu{\fX}$ if and only if $\cT$ extends
to a bounded operator from $\huG{\fX}$ to $\lu{\fX}$.
\end{enumerate}
\end{proposition}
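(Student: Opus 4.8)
The plan is to exploit the rigidity imposed by $G$-invariance. Since $k(x,y)=k(g\cdot x,g\cdot y)$ for every $g\in G$, the kernel depends only on the combinatorial distance $d(x,y)=2\rho(x,y)$, so there is a sequence $(\kappa_j)_{j\ge0}$ with $k(x,y)=\kappa_{d(x,y)}$. For \rmi\ I would simply test the $\hu{\fX}\to\lu{\fX}$ bound on the most elementary mean-zero atoms. Fix a neighbour $y$ of $o$ and put $a=c\,(\delta_y-\delta_o)$, where $\delta_x$ is the unit mass at $x$ and $c=(2(q+2))^{-1/2}$; this is an $\hu{\fX}$-atom, being supported in the ball $B(o,1/2)$ of radius $\le1$, having vanishing integral, and satisfying $\norm{a}{2}=c\sqrt2=\mu\bigl(B(o,1/2)\bigr)^{-1/2}$. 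As $\cT a=c\,[k(\cdot,y)-k(\cdot,o)]$, boundedness gives $\sum_{x}\mod{k(x,y)-k(x,o)}\le c^{-1}\,\norm{\cT}{\hu{\fX}\to\lu{\fX}}$, a bound that is uniform in the neighbour $y$ by $G$-invariance. Discarding the terms with $\rho(x,o)<2$ yields precisely the reformulated local H\"ormander condition \pref{f: LHIC reform}.

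Part \rmii\ is the crux. Counting the sphere $\{x:d(x,o)=j\}$, which contains $(q+1)q^{j-1}$ points, of which $q^{j-1}$ lie on the $y$-side (so $d(x,y)=j-1$) and $q^{j}$ on the far side (so $d(x,y)=j+1$), shows that \pref{f: LHIC reform} is equivalent to the scalar statement $\sum_{j}q^{j}\,\mod{\kappa_{j+1}-\kappa_j}<\infty$. The target $\sum_{x}\mod{k(x,o)}\asymp\sum_{j}q^{j}\,\mod{\kappa_j}$ involves the values, not the successive differences, so the two cannot be compared without extra input; indeed the constant kernel $\kappa\equiv1$ satisfies \pref{f: LHIC reform} trivially but is not summable. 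The missing ingredient is the decay $\kappa_j\to0$, which I would read off from the $\ld{\fX}$-boundedness of $\cT$ (built into the definition of kernel in Section~\ref{s: Notation}, and, as the example shows, indispensable): then $\cT\delta_o=k(\cdot,o)\in\ld{\fX}$, whence $\sum_j q^{j}\mod{\kappa_j}^2<\infty$ and in particular $\kappa_j\to0$. Summing by parts, i.e. writing $\kappa_j=\sum_{m\ge j}(\kappa_m-\kappa_{m+1})$ and interchanging the order of summation,
\[
\sum_{j}q^{j}\,\mod{\kappa_j}
\le\sum_{j}q^{j}\sum_{m\ge j}\mod{\kappa_{m+1}-\kappa_m}
=\sum_{m}\mod{\kappa_{m+1}-\kappa_m}\sum_{j\le m}q^{j}
\le\frac{q}{q-1}\sum_{m}q^{m}\,\mod{\kappa_{m+1}-\kappa_m}<\infty.
\]
This establishes $\sum_{x}\mod{k(x,o)}<\infty$, and then $\norm{\cT f}{1}\le\sum_{y}\mod{f(y)}\sum_{x}\mod{k(x,y)}=\norm{k(\cdot,o)}{1}\,\norm{f}{1}$, the last identity again by $G$-invariance, shows that $\cT$ is bounded on $\lu{\fX}$.

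For \rmiii, one direction is immediate, since $\hu{\fX}\subset\huG{\fX}$ with $\norm{\cdot}{\huG{\fX}}\le\norm{\cdot}{\hu{\fX}}$. For the converse, suppose $\cT:\hu{\fX}\to\lu{\fX}$ is bounded. By \rmi\ its kernel obeys \pref{f: LHIC reform}, hence by \rmii\ it is summable and $\cT$ is bounded on $\lu{\fX}$. It then suffices to bound $\cT$ uniformly over $\huG{\fX}$-atoms: for an $\hu{\fX}$-atom this is the hypothesis, while a global atom $a$ supported in a ball $B$ of radius $1$ satisfies $\norm{a}{1}\le\mu(B)^{1/2}\norm{a}{2}\le1$ by Cauchy--Schwarz, so $\norm{\cT a}{1}\le\norm{\cT}{\lu{\fX}\to\lu{\fX}}$. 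Since every $f\in\huG{\fX}$ is $\sum_j\la_j a_j$ with $\sum_j\mod{\la_j}<\infty$, and this series converges in $\lu{\fX}$ (each atom having $\lu{\fX}$-norm at most $1$), continuity of $\cT$ on $\lu{\fX}$ gives $\cT f=\sum_j\la_j\,\cT a_j$ and hence $\norm{\cT f}{1}\le C\,\norm{f}{\huG{\fX}}$.

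The single genuine difficulty is this value-from-differences passage in \rmii: the H\"ormander condition is insensitive to an additive constant, so everything hinges on first extracting the decay $\kappa_j\to0$ from $\ld{\fX}$-boundedness and only then summing by parts against the exponential volume growth $q^{j}$.
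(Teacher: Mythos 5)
Your proof is correct, and in the crucial part \rmii\ it follows a genuinely different route from the paper's. Parts \rmi\ and \rmiii\ are essentially the paper's own arguments: the paper also tests the $\hu{\fX}\to\lu{\fX}$ bound on differences $\de_y-\de_{y_0}$ of adjacent point masses, and closes \rmiii\ exactly as you do, by first upgrading to $\lu{\fX}$-boundedness. For \rmii, however, the paper never reduces to the scalar sequence $(\kappa_j)$: setting $\eta=k(\cdot,o)$, it rewrites the H\"ormander sum in polar coordinates as $\sum_{j\ge 2}\sum_{\rho(x,o)=j}\sum_{x'\sim x}\mod{\eta(x')-\eta(x)}$, bounds this from below by $C\sum_{\rho(x,o)\ge 2}\mod{\nabla\eta(x)}$ (equivalence of the $\ell^1$ and $\ell^2$ norms on the $q+1$ edge differences at each vertex), and then invokes the $L^1$ Cheeger isoperimetric inequality on the tree, $\norm{\eta}{1}\leq C\,\norm{\,\mod{\nabla\eta}\,}{1}$ \cite[Thm~VI.4.2]{Ch1}, to conclude that $\sum_x\mod{k(x,o)}<\infty$. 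Your sphere-counting plus Abel-summation argument is more elementary and self-contained, while the paper's is more conceptual: it does not use the exact cardinality of spheres and would survive on any graph with positive Cheeger constant, in parallel with the manifold case (Theorem~\ref{t: 2}). There is one respect in which your write-up is sharper: you isolate explicitly, via the constant-kernel example, that the H\"ormander condition alone cannot yield summability, and that the decay $\kappa_j\to 0$ --- extracted from the $\ld{\fX}$-boundedness built into the paper's definition of a kernel --- is indispensable. The same point is hidden in the paper's proof: the inequality $\norm{f}{1}\leq C\norm{\,\mod{\nabla f}\,}{1}$ holds for finitely supported functions (or functions vanishing at infinity) and fails for constants, so applying it to $\eta$ tacitly uses exactly the decay at infinity that you derive from $\eta=\cT\de_o\in\ld{\fX}$. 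Thus your route both proves the statement and makes explicit a hypothesis that the paper's argument uses silently.
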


\begin{proof}
First we prove \rmi.
For each pair $y,y_0$ of adjacent vertices, define
the function $a_{y,y_0}$ by $\de_y-\de_{y_0}$.  Clearly $a_{y,y_0}$
is a multiple of an $\hu{\mu}$-atom, and
$$
\begin{aligned}
\cT a_{y,y_0} (w)
&  = \sum_{z \in \fX} a_{y,y_0}(z) \, k(w, z) \\
&  =  k(w, y) - k(w, y_0)
\quant w \in \fX.
\end{aligned}
$$
The assumption $\cT$ bounded from $\hu{\mu}$ to $\lu{\mu}$
forces $\norm{\cT a_{y,y_0}}{1}$ to be uniformly bounded with
respect to all $y$ and $y_0$ such that $y\sim y_0$.  Thus,
$$
\max_{y_0} \sum_{y\sim y_0} \, \, 
\sum_{w\in \fX} \mod{k(w, y) - k(w, y_0)}
\leq C \, \opnorm{\cT}{H^1;L^1},
$$
as required.

Next we prove \rmii.  Fix a reference point $o$,
and denote by $\eta: \fX \to \BC$ the function defined by 
$$
\eta (x) 
= k(x, o).
$$
Clearly $\eta$ is a radial function, i.e., it depends only
on the distance of $x$ from $o$.  Suppose that $x$ is a point
at distance $j$ from $o$ and that $y\sim o$.  Then the distance
from $x$ to $y$ is either $j-1$ or $j+1$.  Furthermore, 
there are exactly $q$ vertices $y$ adjacent to $o$ 
such that $\rho(x,y) = j+1$ and only one vertex adjacent to $o$ such that 
$\rho(x,y) = j-1$.  Now, by summing in polar co-ordinates centred at $o$,
we see that
\begin{equation} \label{f: polar}
\begin{aligned}
\sum_{y\sim o} \, \,
\sum_{x: \rho(x,o) \geq 2} \mod{k(x,y)-k(x,o)}
& =  \sum_{j=2}^\infty \sum_{x: \rho(x,o) = j} 
     \sum_{y\sim o}\mod{k(x,y)-k(x,o)} \\
& = \sum_{j=2}^\infty \, \,
    \sum_{x: \rho(x,o) = j} \,\,  \sum_{x'\sim x} \mod{\eta (x')-\eta (x)}.
\end{aligned}
\end{equation}
Observe that there exists a constant $C$, depending on $q$, such that
$$
\sum_{x'\sim x} \mod{\eta (x')-\eta (x)}
\geq C \, \Bigl[ \sum_{x'\sim x} \mod{\eta (x')-\eta (x)}^2 \Bigr]^{1/2}.  
$$
and recall that the right hand side is just
$C \, \mod{\nabla\eta(x)}$, by definition of length of the gradient
of $\eta$ (see, for instance, \cite[p.~658]{CG}).
Then, by summing both sides with respect to all $x$ such that 
$\rho(x,o)\geq 2$ and using (\ref{f: polar}), we obtain
$$
\sum_{y\sim o} \, \,
\sum_{x: \rho(x,o) \geq 2} \mod{k(x,y)-k(x,o)}
\geq C\, \sum_{x: \rho(x,o) \geq 2}  \mod{\nabla\eta (x)}.  
$$
Clearly 
$$
\sum_{x: \rho(x,o) < 2}  \mod{\nabla\eta (x)}
< \infty,
$$
because the sum is finite, 
so that we may conclude that $\norm{\,\mod{\nabla\eta}\,}{1}$ 
is finite.   
By the isoperimetric property \cite[Thm~VI.4.2]{Ch1}, 
$\norm{\,\mod{\nabla\eta}\,}{1} \geq C\, \norm{\eta}{1}$,
hence $\eta$ is in $\lu{\fX}$, i.e.,
$$
\sum_{x \in \fX} \mod{k(x,o)}
< \infty,
$$
as required.
This condition clearly implies that $\cT$ is bounded
on $\lu{\fX}$, and the proof of \rmii\ is complete. 

Finally, to prove \rmiii, observe that if
$\cT$ extends to a bounded operator from $\huG{\fX}$
to $\lu{\fX}$, then clearly $\cT$ extends
to a bounded operator from $\hu{\fX}$ to $\lu{\fX}$.

Conversely, suppose that 
$\cT$ extends to a bounded operator from $\hu{\fX}$
to $\lu{\fX}$.  By \rmi\ its kernel $k$
satisfies a H\"ormander integral condition, so that
$k$ satisfies
$$
\sum_{x \in \fX} \mod{k(x,o)}
< \infty
$$
by \rmii.  Therefore
$\cT$ extends to a bounded operator on $\lu{\fX}$,
hence, \emph{a fortiori}, from $\huG{\fX}$ to $\lu{\fX}$.
\end{proof}

\subsection{Riemannian manifolds}

Finally, we consider a connected noncompact Riemannian 
manifold $M$, with spectral gap and Ricci curvature bounded from below.
Recall that a Riemannian manifold $M$
is said to have spectral gap if the bottom of the $L^2$ spectrum
of the associated Laplace--Beltrami operator is strictly positive. 
Such manifolds possess the isoperimetric property (see, for instance,
\cite[Section~9]{CMM1} and the references therein).

Denote by $\mu$ the Riemannian measure of $M$.

\begin{theorem}\label{t: 2}
Suppose that $M$ is as above, that
$\cT$ is a bounded linear operator on $\ld{\mu}$ and that its
kernel $k$ satisfies 
\begin{equation}  \label{f: ***}
C_0
:= \sup_{y \in M} \int_{B(y, 2)^c} \mod{\nabla_1 k(x,y)} \wrt\mu(y)
<\infty.
\end{equation}
Then $\cT$ extends to a bounded operator from $\hu{\mu}$ to $\lu{\mu}$
if and only if $\cT$ extends to a bounded operator from
$\huG{\mu}$ to $\lu{\mu}$.
\end{theorem}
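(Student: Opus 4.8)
The plan is to deduce the theorem from Proposition~\ref{p: kernelL1 mms}, exactly as Theorem~\ref{t: 1} was deduced from Proposition~\ref{p: kernelL1}. The implication ``$\cT$ bounded from $\huG{\mu}$ to $\lu{\mu}$ $\Rightarrow$ $\cT$ bounded from $\hu{\mu}$ to $\lu{\mu}$'' is immediate and uses neither (\ref{f: ***}) nor the spectral gap: every $\hu{\mu}$-atom is a $\huG{\mu}$-atom, so $\norm{f}{\huG{\mu}}\le\norm{f}{\hu{\mu}}$, whence $\hu{\mu}\contained\huG{\mu}$ continuously and the claim follows at once. All the content is in the converse. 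So assume that $\cT$ is bounded from $\hu{\mu}$ to $\lu{\mu}$. By part \rmii\ of Proposition~\ref{p: kernelL1 mms} it suffices to verify the uniform integrability estimate (\ref{f: kernelinL1 I}), i.e. that $I_\infty<\infty$. Thus the whole theorem reduces to the implication
\[
C_0<\infty \implies I_\infty<\infty,
\]
which is the Riemannian counterpart of step \rmii\ of the proposition on homogeneous trees, and it is precisely here that the geometry of $M$ enters.

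The mechanism is that the spectral gap hypothesis is equivalent to the positivity of Cheeger's constant $h(M)$, hence to an $L^1$ Poincar\'e inequality $\norm{g}{1}\le h(M)^{-1}\,\norm{\,\mod{\nabla g}\,}{1}$ for functions $g$ that decay at infinity; morally this is the same isoperimetric input used on the tree, where one invoked $\norm{\,\mod{\nabla\eta}\,}{1}\ge C\,\norm{\eta}{1}$. Concretely I would fix $y$, choose a smooth cutoff $\phi_y$ equal to $0$ on $B(y,2)$, equal to $1$ off $B(y,3)$ and with $\mod{\nabla\phi_y}\le C$, and apply the $L^1$ Poincar\'e inequality to $\phi_y\,k(\cdot,y)$. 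Since $\supp\nabla\phi_y$ and $\supp(\phi_y\,\mod{\nabla_1 k(\cdot,y)})$ both lie in $B(y,2)^c$, the product rule $\mod{\nabla(\phi_y k)}\le \phi_y\,\mod{\nabla_1 k}+\mod{\nabla\phi_y}\,\mod{k}$ together with (\ref{f: ***}) yields
\[
\int_{B(y,3)^c}\mod{k(x,y)}\wrt\mu(x)
\le h(M)^{-1}\Bigl(C_0+C\int_{B(y,3)\setminus B(y,2)}\mod{k(x,y)}\wrt\mu(x)\Bigr).
\]
Consequently $I_\infty$ is finite as soon as the integral of $\mod{k(\cdot,y)}$ over the bounded shell $B(y,3)\setminus B(y,2)$ is bounded uniformly in $y$. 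For this shell term I would exploit that, as the Ricci curvature is bounded from below, $M$ satisfies Bishop--Gromov volume comparison, hence is locally doubling (so $\mu(B(y,3))\le C\,\mu(B(y,1))$ uniformly), and satisfies a local $L^1$ Poincar\'e inequality on balls of bounded radius. Applying the latter along a chain of balls covering the shell and contained in $B(y,2)^c$ bounds the oscillation $\int_{\mathrm{shell}}\mod{k(\cdot,y)-m_y}\wrt\mu$ by $C\,C_0$, where $m_y$ is the mean of $k(\cdot,y)$ over a fixed reference ball inside the shell; the remaining anchoring of $\mu(\mathrm{shell})\,\mod{m_y}$ would come from the $\ld{\mu}$-boundedness of $\cT$ and the volume comparison, with the assumed $\hu{\mu}\to\lu{\mu}$ bound available to absorb the mean-zero contribution.

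I expect this last anchoring step to be the main obstacle, and for two linked reasons. First, (\ref{f: ***}) controls only the \emph{first-variable} gradient, whereas the natural $\ld{\mu}$- and $\hu{\mu}\to\lu{\mu}$-estimates pair with the \emph{second} (input) variable of the kernel; reconciling these requires working at a fixed scale and carefully reconciling the radius-$2$ scale of the hypothesis with the radius-$1$ scale of the global atoms and of the shell. Second, to invoke Cheeger's $L^1$ Poincar\'e inequality legitimately one must know that $k(\cdot,y)$ decays at infinity (e.g. that its superlevel sets in $B(y,2)^c$ have finite measure); rather than assume this a priori, I would run the Poincar\'e estimate through the coarea formula on truncations $\min((\mod{k(\cdot,y)}-s)^+,\tau)$, so that only $\nabla_1 k$ and finiteness of superlevel sets at a single large threshold are needed, the latter being supplied by the $\ld{\mu}$-boundedness together with local elliptic and volume estimates. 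Once the shell bound and this decay justification are in place, taking the supremum over $y$ gives $I_\infty\le C\,C_0<\infty$, and Proposition~\ref{p: kernelL1 mms}~\rmii\ finishes the proof.
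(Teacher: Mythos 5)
Your reduction of the theorem to the implication ``$C_0<\infty \Rightarrow I_\infty<\infty$'' via Proposition~\ref{p: kernelL1 mms}~\rmii\ is a legitimate strategy in principle, but your proof of that implication has a genuine gap, and it sits exactly where you flag it: the shell term. After applying the Cheeger inequality (\ref{f: isop}) to $\phi_y\,k(\cdot,y)$ you are left with $\int_{B(y,3)\setminus B(y,2)}\mod{k(x,y)}\wrt\mu(x)$, and none of the available hypotheses controls this quantity uniformly in $y$. The $\ld{\mu}$-boundedness of $\cT$ gives no information on local integrals of the kernel: $k(\cdot,y)$ is morally the image under $\cT$ of a point mass at $y$, not of a normalized $\ld{\mu}$ function, so Cauchy--Schwarz never enters. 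The assumed $\hu{\mu}\to\lu{\mu}$ bound pairs only with cancellative atoms, and extracting information about $k(\cdot,y)$ itself from $\cT a$ (as is done in the proof of part \rmi\ of the proposition) requires controlling differences $k(x,v)-k(x,y)$ in the \emph{second} variable, i.e.\ a H\"ormander-type condition (\ref{f: hormander I}) --- which Theorem~\ref{t: 2} precisely does not assume: hypothesis (\ref{f: ***}) concerns the first variable only. So the proposed ``anchoring'' of the mean $m_y$ cannot be carried out from the stated ingredients; indeed it is not even clear that $I_\infty<\infty$ follows from the theorem's hypotheses at all.

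The paper's proof bypasses this difficulty entirely: it never estimates the kernel near the diagonal, and never proves (\ref{f: kernelinL1 I}). Instead of applying the isoperimetric inequality to a cutoff of $k(\cdot,y)$, it applies it to $(1-\vp_B)\,\cT b$, where $b$ is a global atom supported in a ball $B$ of radius $1$ and $\vp_B$ is a Lipschitz cutoff equal to $1$ on $2B$ and to $0$ off $3B$. Then
$$
\nabla\bigl[(1-\vp_B)\,\cT b\bigr]
= -(\nabla\vp_B)\,\cT b + (1-\vp_B)\int_B \nabla_1 k(\cdot,y)\,b(y)\wrt\mu(y),
$$
and the two terms are controlled by exactly the two hypotheses your argument could not bring to bear on the kernel: the first is supported in $3B$, where $\norm{\ca_{3B}\,\cT b}{1}\leq \mu(3B)^{1/2}\,\norm{\cT b}{2}\leq C\,\opnorm{\cT}{2}$ by Cauchy--Schwarz, the atom's normalization $\norm{b}{2}\leq\mu(B)^{-1/2}$ and local doubling; the second is bounded by $C_0\,\norm{b}{1}\leq C_0$ via Tonelli and (\ref{f: ***}). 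This gives the uniform bound $\norm{\cT b}{1}\leq C\,\opnorm{\cT}{2}+C_0$ on global atoms directly, which combined with the assumed uniform bound on $\hu{\mu}$-atoms yields boundedness from $\huG{\mu}$ to $\lu{\mu}$. In short: the object to which the isoperimetric inequality must be applied is the image of an atom, not the kernel; the atom's $\ld{\mu}$ normalization is what kills the near term, and that is the ingredient your shell estimate is missing. (Your first paragraph --- the trivial direction and the reduction to uniform boundedness on global atoms --- is correct and agrees with the paper.)
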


\begin{proof}
Clearly if $\cT$ extends to a bounded operator from $\huG{\mu}$
to $\lu{\mu}$, then it extends to a bounded operator
from $\hu{\mu}$ to $\lu{\mu}$.

Conversely, suppose that $\cT$ extends to a bounded operator
from $\hu{\mu}$ to $\lu{\mu}$.  Then it is uniformly
bounded on $\hu{\mu}$-atoms.  Hence to conclude
the proof of the theorem it suffices to prove that
$\cT$ is uniformly bounded on global atoms. 

It is straightforward to check that
for each ball $B$ of radius $1$, there exists 
a Lipschitz function $\vp_B$ on $M$ such that $\vp_B=1$ on 
$2B$, $\vp_B=0$ on $M\setminus 3B$ and $\norm{\nabla\vp_B}{\infty} \leq 1$
almost everywhere. 

Suppose that $b$ is a global atom supported in a ball $B$ of radius $1$.  
Observe that
\begin{equation}\label{f: v}
\begin{aligned}
\norm{\cT b}{1}
&  \leq \norm{\vp_B\, \cT b}{1}+\norm{(1-\vp_B)\, \cT b}{1}  \\
& \leq  \norm{\ca_{3B}\,\cT b}{1} + \norm{(1-\vp_B)\, \cT b}{1}.
\end{aligned}
\end{equation}
We estimate the two summands above separately.

To estimate the first, we observe that, 
by Schwarz's inequality and the fact that $\mu$ is locally
doubling,
\begin{equation}\label{f: **}
\begin{aligned}
\norm{\ca_{3B}\, \cT b}{1}
& \leq \mu(3B)^{1/2} \, \, \norm{\cT b}{2} \\
& \leq \sqrt{\frac{\mu(3B)}{\mu(B)}} \,  \, \opnorm{\cT}{2} \\ 
& \leq C\, \opnorm{\cT}{2}.
\end{aligned}
\end{equation}

To estimate the second summand we shall use
the analytic Cheeger isoperimetric property \cite{Ch}, 
which states that there exists a constant $C$ such that
\begin{equation} \label{f: isop}
\norm{f}{1}
\leq C \, \norm{\nabla f}{1}
\quant f \in \lu{\mu}.
\end{equation}
Observe that
$$
\nabla \bigl[(1-\vp_B) \, \cT b\bigr]
= -(\nabla\vp_B)\, \cT b + (1-\vp_B) \int_B \nabla_1k(\cdot,y) 
   \, b(y)\wrt\mu(y).
$$
Now we apply (\ref{f: isop}) to $(1-\vp_B)\, \cT b$,
and the triangle inequality in the formula above, and obtain
$$
\norm{(1-\vp_B)\, \cT b}{1}
\leq \norm{\ca_{3B}\, \cT b}{1}
  +  \int_{(2B)^c}\!\!\!
  \wrt\mu(x) \, \Bigmod{\int_B\nabla_1k(x,y)\,  b(y)\wrt\mu(y)}.
$$
We estimate the first summand on the right hand
side as in (\ref{f: **}).  To estimate the
second, we use Tonelli's theorem and obtain that 
$$
\int_{(2B)^c}\!\!\!
  \wrt\mu(x) \, \Bigmod{\int_B\nabla_1k(x,y)\,  b(y)\wrt\mu(y)}
\leq \int_B\wrt\mu(y)\, \mod{b(y)} \, \int_{(2B)^c}
   \, \mod{\nabla_1k(x,y)}\wrt\mu(x).
$$
Thus,
\begin{equation}\label{f: *v}
\norm{(1-\vp_B)\cT b}{1}\leq C\, \opnorm{\cT}{2} + C_0.
\end{equation}
By combining (\ref{f: **}) and (\ref{f: *v}), 
we get that there exists a constant $C$ such that
$$
\norm{\cT b}{1}
\leq C\, \opnorm{\cT}{2} + C_0
$$
for all global atoms, as required to conclude the proof of the theorem.
\end{proof}

Now suppose that $M$ is a connected noncompact \emph{unimodular} Lie group, 
endowed with a left invariant Riemannian metric and denote by $\mu$
the associated Riemannian measure (a constant multiple of the left
Haar measure).  We assume that $M$ has spectral gap.

For each element $X$ in the Lie algebra $\frm$ of $M$, 
denote by $\wt X_\ell$ and $\wt X_r$ the left
invariant and the right invariant vector fields whose
value at $e$ is exactly $X$ respectively.
Write $\check f (z)$ for $f(z^{-1}$.
It is straightforward to check that 
\begin{equation} \label{f: left right inv vector fields}
\wt X_\ell \check f
= -\bigl(\wt X_r f\bigr)\check{\phantom{a}}  
\end{equation}
for all functions $f$ in $C_c^\infty(M)$.
Choose an orthonormal basis $X_1,\ldots,X_n$ of $\frm$
(with respect to the given Riemannian metric).
Then 
\begin{equation}  \label{f: length grad}
\mod{\nabla f}(x)
= \Bigl(\,\sum_{j=1}^n \, \bigmod{\wt {(X_j)}_\ell f(x)}^2 \,\Bigr)^{1/2}.
\end{equation}
where $\mod{\nabla f}(x)$ denotes the length of the Riemannian gradient 
of $f$ at the point $x$.

Suppose that $\cT$ is a left invariant operator,
with kernel $k$; define the \emph{convolution kernel} $K$
of $\cT$ by the rule
$$
K(x) 
= k(x,e)
\quant x \in M,
$$
where $e$ denotes the identity of the group $M$. 
Then
$$
k(x,y)
= K (y^{-1} x)
\quant x,y \in M.
$$
Note that $k$ satisfies the local
H\"ormander condition (\ref{f: hormander I}) if and only if $K$
satisfies the following
$$
\sup_{B} \, \sup_{y\in B}
\int_{(2B)^c} \mod{K(y^{-1}x)-K(x)} \wrt\mu (x)
<\infty,
$$
where $B$ runs over all balls of radius at most $1$ centred at the
identity.
In the case where $k$ is differentiable off the diagonal of $M \times M$,
then $K$ is differentiable off the identity, and 
it is often convenient to express the local H\"ormander condition 
in the following form 
\begin{equation} \label{f: Horm nabla}
\sup_{r \in (0,1]} \,r  \, 
\int_{B(e, r)^c} \Bigl(\,\sum_{j=1}^n \, \bigmod{\wt {(X_j)}_r 
       K(x)}^2 \,\Bigr)^{1/2} \wrt\mu (x)  \\
<\infty.
\end{equation}

\begin{corollary}
Suppose that $M$ is a Lie group as above, that
$\cT$ is a left invariant linear operator, bounded on $\ld{\mu}$,
and that its convolution kernel $K$ satisfies the local 
H\"ormander integral condition (\ref{f: Horm nabla}). 
Then $\cT$ extends to a bounded 
operator from $\huG{\mu}$ to $\lu{\mu}$.
\end{corollary}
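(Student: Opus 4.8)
The plan is to deduce the corollary from Proposition~\ref{p: kernelL1 mms}. Since $\cT$ is left invariant its kernel is $k(x,y)=K(y^{-1}x)$, and the discussion preceding the statement shows that the local H\"ormander condition (\ref{f: hormander I}) for $k$ is equivalent to (\ref{f: Horm nabla}); hence, by \cite[Thm~8.2]{CMM1}, $\cT$ extends to a bounded operator from $\hu{\mu}$ to $\lu{\mu}$. By Proposition~\ref{p: kernelL1 mms}~\rmii\ it therefore suffices to verify that $k$ satisfies the estimate (\ref{f: kernelinL1 I}), i.e. that $I_\infty<\infty$. Because $\mu$ is left invariant and $B(y,2)=y\cdot B(e,2)$, the change of variables $z=y^{-1}x$ gives
$$
\int_{B(y,2)^c}\mod{K(y^{-1}x)}\wrt\mu(x)=\int_{B(e,2)^c}\mod{K(z)}\wrt\mu(z),
$$
which is independent of $y$. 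Thus $I_\infty=\int_{B(e,2)^c}\mod{K}\wrt\mu$, and everything reduces to showing that $K$ is integrable away from the identity.

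The hypothesis (\ref{f: Horm nabla}), evaluated at $r=1$, gives exactly $\int_{B(e,1)^c}\bigl(\sum_j\bigmod{\wt{(X_j)}_rK}^2\bigr)^{1/2}\wrt\mu<\infty$, i.e. the \emph{right} invariant gradient of $K$ is integrable at infinity. On the other hand, the only isoperimetric inequality at our disposal, (\ref{f: isop}), is stated for the Riemannian, hence \emph{left} invariant, gradient (\ref{f: length grad}). The hard part is to bridge this left/right discrepancy, and this is where unimodularity enters. Using (\ref{f: left right inv vector fields}) one checks that $\mod{\nabla\check f}(x)=\bigl(\sum_j\bigmod{\wt{(X_j)}_rf}^2\bigr)^{1/2}(x^{-1})$; applying (\ref{f: isop}) to $\check f$ and using that the inversion $x\mapsto x^{-1}$ preserves $\mu$ (here unimodularity is essential) yields the \emph{right} invariant isoperimetric inequality
$$
\norm{f}{1}\leq C\,\int_M\Bigl(\,\sum_{j=1}^n\bigmod{\wt{(X_j)}_rf}^2\Bigr)^{1/2}\wrt\mu
\quant f\in\lu{\mu}.
$$

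With this inequality in hand the verification of $I_\infty<\infty$ is routine. I would pick a Lipschitz cut-off $\vp$ with $\vp=1$ on $B(e,1)$ and $\supp\vp\subset B(e,2)$, apply the right invariant isoperimetric inequality to $(1-\vp)K$ (after a standard truncation to compact support, letting the truncation radius tend to infinity), and expand $\wt{(X_j)}_r\bigl[(1-\vp)K\bigr]=-(\wt{(X_j)}_r\vp)\,K+(1-\vp)\,\wt{(X_j)}_rK$. The first term is supported in the bounded annulus $B(e,2)\setminus B(e,1)$, on which $K$ is integrable (being locally integrable off the identity and the annulus being relatively compact away from $e$), while the integral of the second is dominated by $\int_{B(e,1)^c}\bigl(\sum_j\bigmod{\wt{(X_j)}_rK}^2\bigr)^{1/2}\wrt\mu$, finite by the previous paragraph. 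Hence $\int_{B(e,2)^c}\mod{K}\wrt\mu\leq\norm{(1-\vp)K}{1}<\infty$, which is exactly $I_\infty<\infty$, and the corollary follows.

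The one genuinely nontrivial point is thus the passage from the right invariant gradient control supplied by the H\"ormander condition to the $\lu{\mu}$ control of $K$ at infinity needed in (\ref{f: kernelinL1 I}): once the right invariant isoperimetric inequality is established through the inversion $f\mapsto\check f$ and unimodularity, the rest is bookkeeping. Alternatively, one could bypass Proposition~\ref{p: kernelL1 mms} and run the argument of Theorem~\ref{t: 2} verbatim with the right invariant gradient in place of $\nabla$, the boundedness of $\Ad(y^{-1})$ for $y$ in the fixed ball $B(e,1)$ absorbing the discrepancy between $\wt{(X_j)}_r(b\ast K)$ and $(\wt{(X_j)}_rK)\ast b$; but invoking Proposition~\ref{p: kernelL1 mms} is shorter.
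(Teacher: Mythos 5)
Your proposal is correct and takes essentially the same route as the paper: reduce via Proposition~\ref{p: kernelL1 mms}~\rmii\ to the uniform integrability of $k$ away from the diagonal, use (\ref{f: Horm nabla}) at $r=1$ to control the right invariant gradient of $K$ off $B(e,1)$, and transfer Cheeger's inequality (\ref{f: isop}) through the inversion $f\mapsto\check f$ and unimodularity to conclude that $K$ is integrable away from the identity. The only cosmetic difference is that you first isolate a right invariant isoperimetric inequality and apply it to $(1-\vp)K$, whereas the paper applies (\ref{f: isop}) directly to $(1-\vp)\check K$ and inverts at the end; the underlying computation is identical (and your explicit mention of the truncation needed before invoking (\ref{f: isop}) is a point the paper glosses over).
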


\begin{proof}
Observe that if $K$ satisfies (\ref{f: Horm nabla}), 
then the kernel $k$ of $\cT$ satisfies the H\"ormander integral condition 
(\ref{f: hormander I}). 
Hence, by \cite[Thm 8.2]{CMM1}, 
$\cT$ is bounded from $\hu{\mu}$ to $\lu{\mu}$. 

We claim that the kernel $k$ satisfies the inequality
$$
 \sup_{y\in M} \int_{B(y,2)^{c}} \mod{k(x,y)}\wrt\mu(x)
<\infty,
$$
whence the desired conclusion follows by Proposition \ref{p: kernelL1 mms}.\par
 To prove the claim, we observe that by (\ref{f: left right inv vector fields}) and
(\ref{f: length grad})
$$
\begin{aligned}
\int_{B(e, 1)^c} \mod{\nabla \check K(x^{-1})} \wrt\mu (x)
& = \int_{B(e, 1)^c} \Bigl(\,\sum_{j=1}^n \, \bigmod{\wt {(X_j)}_\ell
       \check K(x^{-1})}^2 \,\Bigr)^{1/2} \wrt\mu (x)  \\
& = \int_{B(e, 1)^c} \Bigl(\,\sum_{j=1}^n \, \bigmod{\wt {(X_j)}_r
       K(x)}^2 \,\Bigr)^{1/2} \wrt\mu (x),
\end{aligned}
$$
which is finite because $K$ satisfies the H\"ormander type condition
(\ref{f: Horm nabla}).  Since $M$ is unimodular and $B(e,1)^c$
is invariant under the involution $x\mapsto x^{-1}$,
we may conclude that $\bigmod{\nabla \check K}$ is integrable
on $B(e,1)^c$.
Denote by $\vp$ a smooth cutoff function, which is equal to $1$
in $\OV B(e,1/2)$, and equal to $0$ in $B(e,1)^c$.  Clearly
$\bigmod{\nabla [(1-\vp)\check K]}$ is in $\lu{\mu}$, because
$K$ is differentiable off the origin.  Then the
Cheeger analytic isoperimetric inequality (\ref{f: isop})
implies that $(1-\vp)\check K$ is in $\lu{\mu}$, so that
$$
\int_{B(e,1)^c} \bigmod{\check K} \wrt \mu
<\infty.
$$
By the unimodularity of $M$ we may then conclude that
$$
\int_{B(e,1)^c} \mod{K} \wrt \mu
<\infty.
$$
The claim follows directly from this and the fact that
$k(x,y) = K(y^{-1}x)$.
This concludes the proof.
\end{proof}

\end{document}